\pgfplotsset{compat=1.11}
\DeclareSymbolFont{cyrillic}{T2A}{cmr}{m}{n}
\DeclareMathSymbol{\D}{\mathalpha}{cyrillic}{196}
\theoremstyle{plain}
\newtheorem{theorem}{Theorem}[section]
\newtheorem{lemma}[theorem]{Lemma}
\newtheorem{proposition}[theorem]{Proposition}
\theoremstyle{definition}
\newtheorem{example}{Example}[section]
\theoremstyle{remark}
\newtheorem{remark}[theorem]{Remark}
\def\namedlabel#1#2{\begingroup
   #2%
 \def\@currentlabel{#2}%
   \phantomsection\label{#1}\endgroup
}
\newcommand{\set}[1]{\left\{#1\right\}}
\newcommand{\abs}[1]{\left|#1\right|}
\newcommand*{\math@version@bold}{bold}
\DeclareMathOperator\DD{
  \textrm{%
    \usefont{T2A}{cmr}{\ifx\math@version\math@version@bold bx\else m\fi}{n}%
    \CYRD
  }%
}
\def\R{\ensuremath{\mathbb R}}
\def\T{\ensuremath{\mathbb T}}
\def\N{\ensuremath{\mathbb N}}
\def\P{\ensuremath{\mathcal P}}
\numberwithin{equation}{section}
\newcommand{\bg}[3]{\bigg#1 #2 \bigg#3}
\newcommand{\gev}[1]{\exp {- \bigg[ 1 + \xi\, \bigg( \frac{#1 - \mu}{\sigma}\, \bigg)\, \bigg]^{-\frac{1}{\xi}} }}
\begin{document}

\title[\hfill\protect\parbox{0.975\linewidth}{Runs of extremes. }]{Runs of extremes of observables   on dynamical systems
and applications.}

\author[M. Carney]{Meagan Carney}
\address{Meagan Carney\\ Department of Mathematics\\
University of Houston\\
Houston\\
TX 77204\\
USA} \email{m.carney@uq.edu.au}
\urladdr{https://smp.uq.edu.au/profile/11136/meagan-carney}

\author[M. Holland]{Mark Holland}
\address{Mark Holland\\ Department of Mathematics \& Statistics \\
Harrison Building (327)\\
North Park Road\\
Exeter, EX4 4QF\\ UK} \email{M.P.Holland@exeter.ac.uk}
\urladdr{http://empslocal.ex.ac.uk/people/staff/mph204/}

\author[M. Nicol]{Matthew Nicol}
\address{Matthew Nicol\\ Department of Mathematics\\
University of Houston\\
Houston\\
TX 77204\\
USA} \email{nicol@math.uh.edu}
\urladdr{http://www.math.uh.edu/~nicol/}

\author[P. Tran]{Phuong Tran}
\address{Phuong Tran\\ Department of Mathematics\\
University of Houston\\
Houston\\
TX 77204\\
USA} \email{ptran8791@gmail.com}

\thanks{MN thanks  the NSF  for support from NSF-DMS 2009923.}

\begin{abstract}

We use extreme value theory to estimate the probability of successive exceedances of a threshold value of a time-series of an observable on 
several classes of chaotic dynamical systems. The observables have either a Fr\'echet (fat-tailed) or Weibull (bounded) distribution. 
The motivation for this work was  to give  estimates of the probabilities of sustained periods of weather anomalies such as heat-waves, cold spells or prolonged periods of rainfall in climate models. Our predictions are borne out by numerical simulations  and also analysis of rainfall and temperature data.  

\end{abstract}

\maketitle

\section{Introduction}

The impact of successive extreme weather events on populations has become a significant topic of discussion in climate literature. Recent notable examples include the 2019 global heat-waves, which had severe impacts in Australia, Europe, and the United States, and the 2022 heat-wave in western Europe resulting in a nationally declared emergency in the United Kingdom. Other examples include the 2021-22 floods in Eastern Australia, the 2021 flood in the United Kingdom and Western Europe and the 2017 flood in Texas, all attributed to prolonged heavy rainfall. Understanding the returns of successive extreme weather events like these is crucial for preparing and mitigating disastrous effects. For instance, fire retardant materials can be used to prevent wildfires during heat-waves, and earlier evacuation of populations from high-risk flood zones can be initiated prior to episodes of prolonged rainfall.

Accurately predicting the likelihood of weather anomalies, such as heat-waves, cold spells, or prolonged periods of rainfall, is a challenging problem. In this paper, we explore the use of extreme value theory (EVT) to estimate the probability of successive exceedances of a threshold of a time-series of observations on a variety of uniformly hyperbolic dynamical systems. The study of chaotic dynamical systems provides insights into the statistical behavior of complex systems, such as climate models. We test some of the predictions we derive from uniformly hyperbolic systems on climate data and climate models.

Provided the time-series of an observable $\phi$, namely $(\phi\circ T^j )_{j\ge0}$, follow the assumptions outlined in \cite{LLR}, an extreme value law holds for the maxima of the time-series, given by $M_j = \max\{\phi\dots,(\phi\circ T^j )\}$, which guarantees a limiting probability distribution for the sequence of maxima $(M_j)$. The traditional measure of successive extremes is the extremal index, a parameter which appears in the limiting distribution of the extreme value law and measures the expected number of over-threshold exceedances observed in a short block of the time-series. 

For appropriate scaling choices, the exact limiting distribution is the generalized extreme value distribution (GEV), $G(\xi,\mu,\sigma)$, with shape $\xi$, location $\mu$ and scale $\sigma$ parameters. In practice, the \textit{block maximum approach} is commonly employed which involves dividing the time-series into blocks of a fixed length and performing some likelihood fitting of the parameters of the GEV using the sequence of block maxima. By nature of the block maximum approach, the extremal index is normalized to 1 so that all information on the successive properties of extremes of the data becomes lost. However, a careful choice of observable can preserve some of the properties of successive extremes in this setting. 

In climate applications, successive extremes are often thought of in two ways: a high average in a short run, for example, a high daily temperature average over 7 days; or successive over-threshold values occurring in a short run, for example, a high daily temperature observed every day for 7 days. Motivated by~\cite{Galfi_Lucarini_Wouters} and these definitions, we investigate the time-averages of observables over a time-window of integer length $k$,  $Y (x)=\frac{1}{k}\sum_{j=0}^{k-1} \phi (T^j x)$, as well the exceedance function $Y (x) =\min \{\phi (x), \ldots, \phi (T^{k-1} x)\}$.


This paper's focus is on whether the scaling constants in the time-series of $k$-exceedances or time-averages can be derived from that of the original time-series. We establish scaling constants in some interesting scenarios for hyperbolic systems. Specifically, we consider an observable $\phi: X\rightarrow\mathbb{R}$ on an ergodic dynamical system $(T,X,\mu)$, which is maximized on an invariant repelling set $S$. The prolonged bouts of extreme weather events are associated with, for example, a weather system being in a quasi-invariant state of extreme rainfall or temperature. This is broadly modeled by the time-series of an observable maximized on an invariant set in a chaotic dynamical system. 

We begin our investigation by establishing a general lemma relating the parameters of the GEV for the $k$-exceedances and time-averages coming from any system, with a Fr\'{e}chet or Weibull limiting distribution, satisfying the conditions of \cite{LLR}. This general result is then used to establish more concrete theorems for climate-relevant observables taken on hyperbolic systems. We end with a detailed illustration of the applications of these lemmas and theorems to numerically model successive extremes of time-series data coming from: certain hyperbolic dynamical systems; temperature data simulated from the general circulation model, PUMA; and real-world temperature and precipitation data taken from weather stations throughout Germany. Finally, we illustrate that our scaling results can be applied to obtain more accurate numerical estimates of the GEV parameters for maxima of $k$-exceedance and time-averaged functionals of increasing $k$ window lengths than those of traditional maximum likelihood estimation.

This research was motivated in part by the interesting paper~\cite{Galfi_Lucarini_Wouters} in which the authors analyzed the time and spatial averages of daily temperature anomalies over a large number of days (heat-waves or cold spells) via a large deviation approach.
Large deviation theory was  used to estimate probabilities associated to heat-wave or cold-spell occurrences. The averaging period ranged roughly from 10 to 40  days. The main testing ground was the PUMA climate model of mid-atlantic latitudes, 30 degrees to 60 degrees. 
The important question they addressed is how to estimate the probability of very rare events, such as long runs of extreme weather,  from a limited time-series of recordings.
Their predictions were favorably compared to those estimated by standard extreme value theory (Peaks over threshold model leading to a generalized Pareto distribution (GPD)) applied to long simulations of the PUMA model. A related approach  based on a large deviation algorithm combined with an  importance sampling technique was given in~\cite{Ragone_Wouters_Bouchet}. 

Our dynamical models and numerical results suggest that in certain settings we do not need to average over long time windows to ensure the applicability of extreme value theory and in some sense we may  estimate rare prolonged anomalies from the original time-series of data. The advantage of our method is that while data on, for example, prolonged spells of rainfall is sparse, the simple scalings we find enable us to use all available daily maxima data to estimate the probabilities of prolonged rain  of a certain duration.

\section{Background on EVT for dynamical systems.}

Suppose $\phi:X\to \R$ is an observable on an ergodic dynamical system $(T,X,\mu)$. Extreme value theory (EVT)
 considers  distributional limits of the maxima process
 \[
	M_n=\max \{ \phi,\ldots, \phi\circ T^{n-1} \}
\]
under suitable scaling constants $a_n (M_n-b_n)$. 
	
P. Collet~\cite{Collet} introduced techniques from EVT  to establish hitting and return time statistics to `generic' non-recurrent points in 
some one-dimensional non-uniformly hyperbolic dynamical systems. He considered the observable $\phi (x)=-\ln d(x,x_0)$, where $x_0$
is non-recurrent, in particular not periodic. In this setting EVT is related to hitting and 
return time statistics since $M_n$ increases as orbits make closer approaches to  $x_0$.
Following Collet's work there have been many papers
developing EVT for  hyperbolic dynamical systems. The theory  is well-developed for
$\phi$  maximized at a point $x_0$ and  a function of
distance, as in Collet's case of $\phi (x)=-\ln d(x,x_0)$.  The paper~\cite{CHN} extended some of this
work to the more general setting of observables maximized on submanifolds in phase space, while ~\cite{FFRS} considered observables 
maximized on Cantor sets. In a series of influential papers Freitas et al~\cite{FFT1,FFT5} adapted techniques from~\cite{LLR} 
to the context of deterministic dynamical systems and elucidated the close relation between extremes, hitting time statistics and Poisson processes. For these and other developments we refer to the book~\cite{Extremes_Book} for more details.

More generally suppose  $(X_n)$ is a  stationary process with probability distribution function $F_X(u):=\mu(X\leq u).$ 
When determining the distributional limit of 
\[
M_n =\max\{ X_1, \ldots, X_n\}
\]
it is well-known that to  find the appropriate scaling, given  $\tau\in\mathbb{R}$, one should define  $u_n(\tau)$ as  a sequence satisfying $n\mu(X_0>u_{n}(\tau))\to\tau$, as $n\to\infty$. We say that $(X_n)$ satisfies an extreme value law if 
\begin{equation}\label{eq.ev-law}
\mu(M_n\leq u_n(\tau))\to e^{-\theta\tau}
\end{equation}
for some $\theta\in(0,1]$.
The number  $\theta$ is called the extremal index and $\frac{1}{\theta}$ roughly measures the average cluster size of exceedances given that one exceedance has occurred.
When $(X_n)$ is  iid and has a regularly varying tail it can be shown that under the scaling $u_n$ there exists an extreme value law (EVL) and $\theta=1$. 
If $(X_n)$ is only assumed stationary rather than iid then the existence of an EVL with $\theta=1$  has been shown provided two conditions hold:  (1)  $D(u_n)$ (a mixing condition) and;  (2) $D'(u_n)$ (a non-recurrence condition).   
Thus in the case of an observable on a dynamical system if the time series of observations $X_n=\phi\circ T^n$  satisfy $D(u_n)$ and $D'(u_n)$ (or some variation thereof) then an EVL holds with $\theta=1$.

We will use two conditions $\DD(u_n)$ and $\DD^{'}_q (u_n)$, adapted to the dynamical setting, introduced in the work~\cite{FFT5} that imply a non-degenerate EVL in the 
case the EI  $\theta \not =1$ and also give the value of  $\theta$.

\subsection{Using EVT to estimate the probability of rare events.}

For statistical estimation and fitting schemes such as block maxima or peak over thresholds methods~\cite{Embrechts}, it is desirable to get a limit along linear sequences of the form $u_n(y)=y/a_n+b_n$. 
Here the emphasis is changed and the sequence $u_n(y)$ is now required to be linear in $y$. 


 R.  Fisher and L. Tippett~\cite{Fisher_Tippett} showed:   If $\{X_n\}_{n\in \N}$ a sequence of iid random variables 
  and there exists linear normalizing sequences of constant $\set{a_n>0}_{n\in\N}$ and $\set{b_n}_{n\in\N}$ such that
	        \[ \P\bg{(}{\frac{M_n - b_n}{a_n}\leq y}{)} \to G(y)\qquad\text{as } n\to\infty
	        \]
	        where G is a non-degenerate distribution function
	         then $G(y)=e^{-\tau(y)}$, where $\tau(y)$ is one of the following three types (for some $\beta,\gamma>0$), (up to a scale $\sigma$ and location $\mu$ shift $y\to \frac{y-\mu}{\sigma}$):
	        \begin{itemize}
	            \item [(1)] $\tau (y)=e^{-y}$ for $y\in\R$; (Gumbel) \\
	            \item [(2)] $\tau (y)=y^{-\beta}$ for $y>0$ (Fr\'echet)\\
	            \item [(3)] $\tau (y)=(-y)^{\gamma}$ for $y\le 0$ (Weibull).
	        \end{itemize}

        The three types may be combined in a unified model called the \emph{Generalized Extreme Value} (GEV) distribution
        \[ 
        G(y) = \gev{y}
        \]
        defined on $\set{y:\, 1+ \xi\left(\frac{y -\mu}{\sigma}\right) > 0}$, where $\xi\in\R$ is the shape parameter $\mu\in\R$ is the location parameter,  and $\sigma > 0$ the scale parameter. We have the following classification:
        
        \begin{itemize}
            \item[(1)]  When $\xi=0$, Type I - The Gumbel distibution
            $$G(y) = \exp\left\{-e^{-(\frac{y-\mu}{\sigma})}\right\} \quad ,y\in\R; $$
            \item[(2)]  $\xi>0$ Type II - The Fr\'{e}chet distibution;
            \item[(3)] $\xi<0$ Type III - The  Weibull distribution.
        \end{itemize}

The Gumbel distribution is unbounded, while the Weibull is bounded. The Fr\'echet distribution is bounded below but not above and has `fat tails', $1-F(x)=l(x)x^{-\xi}$ where $l(x)$ is a slowly varying function. An important point is that 
numerical fitting schemes for the GEV distribution are renormalized under place and scale transformations so that the extremal index (EI) is 1 \cite[Theorem 5.2]{Coles}. Hence
the EI is not given by the GEV but subsumed into its parameter estimation.  A technique to estimate the EI has been given by S\"uveges~\cite{Suveges}.
The goal of this work is to determine if there is a relation between $\xi$, $\sigma$ and $\mu$  for the GEV for $\phi$ and the corresponding 
parameters for $Y$, both in the case of $k$-exceedances and $k$-averages.

  We now describe how the GEV model is used to estimate return levels. Suppose we have a time-series of length $mn$ observations.
  For example daily rainfall over a year  gives  $n=365$ daily readings and suppose and we have $m=100$ years of such daily measurements.
    Denote $M_{n,i}=\max\{X_i,\cdots,X_{n+i-1}\}$, $i=1,...,m$.
       We block the data  into sequences of observations of length $ n$
        generating a series of block maxima, $M_{n,1},\dots,M_{n,m}$ to which the GEV distribution can be fitted. This assumes $n$ is large enough to ensure convergence of $M_{n,i}$ to $G$ in distribution for each $i$.   Thus we have $m$ samples from a GEV distribution determining the distribution of maximal daily rainfall in a year. We may
        then numerically fit  a GEV, via method of moments or method of maximum likelihood,  and estimate the shape $\xi$, location $\mu$ and scale $\sigma$ parameters. 
       The quantiles of the distribution of the annual maximum daily rainfall are obtained  by inverting the distribution $G$ we obtain:
            \[
            z_p = 
            \begin{cases}
            \mu - \frac{\sigma}{\xi}\bg({1-[-\log(1-p)]^{-\xi}}), & \xi\neq 0 \\
            \mu - \sigma\log[-\log(1-p)], & \xi=0
            \end{cases}
            \]
						
						where $G(z_p)=1-p$ and $z_p$ is the return level associated with the return period $\frac{1}{p}$. Thus, the level $z_p$ is expected to be exceeded on average once every $\frac{1}{p}$ years.

\section{Sufficient conditions for EVT for dynamical systems.}

We let  $(T,X,\mu)$ be an ergodic dynamical system. Here $T$ is a uniformly hyperbolic measure-preserving map of a Riemannian  metric space $(X,\mu)$ equipped with a probability measure $\mu$ which is equivalent to Lebesgue measure. We let $\phi: X\to \R$ be an observable 
which is a function of Euclidean distance and  maximized on  a set $S$.
In this section we consider the extreme value theory of $k$-exceedances and 
time-averages of the observable. More precisely we consider the EVT of the derived time-series $(Y\circ T^i)$ where 
$Y=\min \{\phi, \phi\circ T, \ldots, \phi\circ T^{k-1}\}$ in the case of $k$-exceedances or $Y=\frac{1}{k} \sum_{j=0}^{k-1}\phi\circ T^j$ for time-averages.

In this paper $S$ is taken to be  either a generic non-recurrent point $x_0\in X$; a fixed or periodic point $x_0$; or a  line or curve, invariant or not. The case of a periodic orbit of minimal period $q$ reduces to the case of a fixed point for $T^q$, so we only discuss the case of fixed points. 
We will consider three classes of uniformly hyperbolic dynamical system, namely: uniformly expanding maps of the interval; hyperbolic toral automorphisms; and coupled expanding maps.  We aim not for the greatest generality of dynamical system, as the statements and proofs
soon become tiresomely technical. We wish rather for simple illustrative models from chaotic dynamics
which hopefully shed light on general principles and the behavior of complex physical models.
The extreme value theory for observables  $\phi: X\to \R$   maximized on  such sets  $S$ for 
these systems, among others, was investigated  in~\cite{CHN}. A motivation of~\cite{CHN} was to extend the theory of
EVT for dynamical systems beyond that developed for observables maximized at points to more 
relevant observables to applications. One goal of this paper is to extend this investigation to observables such as $k$-exceedances
and time-averages.  Another goal is to allow predictions  of the GEV for $k$-exceedances
and time averages from  the GEV of the original time series, both in dynamical models and climate data.

\subsection{Conditions for EVT laws.}

We will use two conditions $\DD_q(u_n)$ and $\DD^{'}_q (u_n)$, adapted to the dynamical setting, introduced in the work~\cite{FFT5} that show an extreme 
value distribution holds and  also allows a computation of the extremal index in the case $\theta \not =1$. We change slightly the notation of~\cite{FFT5} and
 write $\DD_{q} (u_n)$ rather than $\DD )u_n)$ to highlight the role of  $q$.  In~\cite{CHN} techniques were developed to verify these conditions in the setting of an observable maximized on an invariant set for the classes of systems we consider.

Let $X_n=\phi\circ T^n$ and define 
\[
A_n^{(q)} :=\lbrace X_0>u_n, X_1 \le u_n,\ldots, X_q\leq  u_n\rbrace.
\]
For $s,l \in \mathbb{N}$ and a set $B\subset M$, let
\[
\mathscr{W}_{s,l}(B)=\bigcap_{i=s}^{s+l-1} T^{-i}(B^c).
\]
Recall that $u_n\equiv u_n(\tau)$ is a sequence satisfying $\lim_{n\to \infty} n\mu (\phi > u_n (\tau) ) =\tau$.

\noindent {\textbf{ Condition $\DD(u_n)$}}: We say that $\DD_q(u_n)$ holds for the sequence $X_0,X_1,\ldots$ if, for every  $\ell,t,n\in \mathbb{N}$
\[
\left|\mu\left(A_n^{(q)}\cap
\mathscr{W}_{t,\ell}\left(A_n^{(q)}\right) \right)-\mu\left(A_n^{(q)}\right)
\mu\left( \mathscr{W}_{0,\ell}\left(A_n^{(q)}\right)\right)\right|\leq \gamma(q,n,t),
\]
where $\gamma(q,n,t)$ is decreasing in $t$ and  there exists a sequence $(t_n)_{n\in \mathbb{N}}$ such that $t_n=o(n)$ and $n\gamma(q,n,t_n)\to0$ when $n\rightarrow\infty$.

We consider the sequence $(t_n)_{n\in\N}$ given by condition $\DD_q(u_n)$ and let $(k_n)_{n\in\N}$ be another sequence of integers such that as $n\to\infty$,
\[
k_n\to\infty\quad \mbox{and}\quad  k_n t_n = o(n).
\]

\noindent {\textbf{ Condition $\DD'_q(u_n)$}}:  
We say that $\DD'_q(u_n)$ holds for the sequence $X_0,X_1,\ldots$ if there exists a sequence $(k_n)_{n\in\N}$ as above  and such that
\[
\lim_{n\rightarrow\infty}\,n\sum_{j=q+1}^{\lfloor n/k_n\rfloor}\mu\left( A_n^{(q)}\cap T^{-j}\left(A_n^{(q)}\right)\right)=0.
\]
\begin{proposition}[~\cite{FFT5}]\label{prop.ei-comp}
Let $M_n=\max\{X_1,X_2, \ldots, X_n\}$.
Suppose that conditions $\DD'_q (u_n)$ and $\DD_q(u_n)$ hold and that the limit
\[
\theta=\lim_{n\to\infty}\theta_n=\lim_{n\to\infty}\frac{\mu(A^{(q)}_n)}{\mu(U_n)},
\]
exists.
Then
\[
\mu (M_n \le u_n (\tau))\to e^{-\theta\tau}.
\] 
\end{proposition}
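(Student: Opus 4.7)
The plan is to run the Leadbetter--Freitas blocking strategy with the ``isolated exceedance'' event $A_n^{(q)}$ playing the role of the primitive marker. First, I would reduce $\{M_n\le u_n\}$ to the cleaner avoidance event $\mathscr{W}_{0,n}(A_n^{(q)})$. Because $A_n^{(q)}\subseteq U_n:=\{X_0>u_n\}$, one has $\{M_n\le u_n\}=\mathscr{W}_{0,n}(U_n)\subseteq\mathscr{W}_{0,n}(A_n^{(q)})$. Conversely, any orbit that avoids $A_n^{(q)}$ throughout $[0,n)$ but hits $U_n$ there must have its \emph{final} $U_n$-hit at some time in $[n-q,n)$; otherwise that last exceedance, having no successor in $[0,n)$, would itself witness $A_n^{(q)}$. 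Hence
\[
0\le \mu\bigl(\mathscr{W}_{0,n}(A_n^{(q)})\bigr)-\mu(M_n\le u_n)\le q\,\mu(U_n)\to 0,
\]
and it suffices to prove $\mu(\mathscr{W}_{0,n}(A_n^{(q)}))\to e^{-\theta\tau}$.

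Next, set $r_n=\lfloor n/k_n\rfloor$ and split $\{0,\dots,n-1\}$ into $k_n$ blocks of length $r_n$, reserving the last $t_n$ indices of each as a mixing buffer. Writing $\mathscr{B}_i=\mathscr{W}_{i r_n,\,r_n-t_n}(A_n^{(q)})$, the mass ignored in the buffers is at most $k_n t_n\,\mu(A_n^{(q)})\to 0$ since $k_n t_n=o(n)$ and $n\mu(U_n)\to\tau$. A telescoping application of $\DD_q(u_n)$, peeling off one block at a time, gives
\[
\Bigl|\mu\Bigl(\bigcap_{i=0}^{k_n-1}\mathscr{B}_i\Bigr)-\prod_{i=0}^{k_n-1}\mu(\mathscr{B}_i)\Bigr|\le k_n\gamma(q,n,t_n)\to 0.
\]

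The within-block estimate is the heart of the argument. By stationarity each factor equals $p_n:=\mu(\mathscr{W}_{0,r_n-t_n}(A_n^{(q)}))$, and the crucial combinatorial observation is that $A_n^{(q)}\cap T^{-\ell}A_n^{(q)}=\emptyset$ for $1\le\ell\le q$ (the defining constraint $X_\ell\le u_n$ conflicts with $X_0\circ T^{-\ell}=X_\ell>u_n$). Bonferroni then yields
\[
1-(r_n-t_n)\mu(A_n^{(q)})\le p_n\le 1-(r_n-t_n)\mu(A_n^{(q)})+r_n\sum_{\ell=q+1}^{r_n}\mu\bigl(A_n^{(q)}\cap T^{-\ell}A_n^{(q)}\bigr).
\]
When the $k_n$ block contributions are aggregated, the second-order correction totals precisely $n\sum_{\ell=q+1}^{\lfloor n/k_n\rfloor}\mu(A_n^{(q)}\cap T^{-\ell}A_n^{(q)})=o(1)$ by $\DD'_q(u_n)$. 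Since $n\mu(A_n^{(q)})=\theta_n\cdot n\mu(U_n)\to\theta\tau$ we have $(r_n-t_n)\mu(A_n^{(q)})=\theta\tau/k_n+o(1/k_n)$, so the elementary limit $(1-\theta\tau/k_n+o(1/k_n))^{k_n}\to e^{-\theta\tau}$ chains with the preceding steps to finish the proof.

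The main obstacle, I anticipate, is not any single step but the balancing of the three scales $q$, $r_n-t_n$, $n$ and the three error sources: the buffer loss $k_n t_n\mu(U_n)$, the mixing error $k_n\gamma(q,n,t_n)$, and the Bonferroni correction controlled by $\DD'_q$. The hypotheses are calibrated so that simultaneously compatible choices of $(r_n),(t_n),(k_n)$ exist; verifying this compatibility--and tracking the fact that the short-range terms ($\ell\le q$) vanish combinatorially while the medium-range terms ($q<\ell\le r_n$) are absorbed by $\DD'_q$--is where the real care is required.
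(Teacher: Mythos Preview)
The paper does not give its own proof of this proposition; it is quoted from \cite{FFT5} and used as a black box. Your sketch is correct and is essentially the argument of \cite{FFT5}: the reduction of $\{M_n\le u_n\}$ to $\mathscr{W}_{0,n}(A_n^{(q)})$ via the last-exceedance observation, the blocking with $t_n$-buffers and telescoping factorisation through $\DD_q(u_n)$, the combinatorial vanishing of the short-range overlaps $A_n^{(q)}\cap T^{-\ell}A_n^{(q)}$ for $1\le\ell\le q$, and the Bonferroni remainder controlled by $\DD'_q(u_n)$ are precisely the steps in that reference.
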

Thus Proposition \ref{prop.ei-comp} above gives us a route to compute the extremal index $\theta$ for a given dynamical system and observable function, provided the conditions 
$\DD'_q (u_n)$ and $\DD_q(u_n)$ hold.

\subsection{Uniformly hyperbolic models.}

 We now briefly describe  three classes of hyperbolic dynamical system. In each there is a notion of expansion transverse
 to the invariant set which is denoted by a parameter $\lambda$  in Theorem~\ref{theorem_invariant}. We specify what $\lambda$ is 
 in each of the three settings $(A)$, $(B)$ and $(C)$ described below.

\subsubsection{(A)  Piecewise $C^2$ uniformly expanding maps of the interval. }

$T : [0,1] \to [0,1]$ is called a $C^2$ piecewise expanding map if there there is a finite partition $0=a_1<a_2<\ldots < a_m=1$
of the interval such that 
$T$ is $C^2$ on the interior of partition elements and there exists $\kappa > 1$ such that for $|DT (x)|>\kappa$ for any $x\in (a_i,a_{i+1})$, $i=1,\ldots, m-1$.
One of the simplest examples of such a  map is the doubling map which has form $T(x) = 2 x $ (mod 1). Such maps have an invariant measure $\mu$ equivalent to Lebesgue and are exponentially mixing in the sense that there exists $\rho\in(0,1)$ such that
\[
\abs{ \int \phi \circ T^n \psi d\mu - \int \phi d\mu \int \psi d\mu} \le C \lVert {\phi}\rVert_{BV} \lVert{\psi}\rVert_{\infty} \rho^n
\]
for all $\phi$ of bounded variation and bounded $\psi$.
For a description of the ergodic properties of such maps see~\cite[Chapter 5]{Boyarsky_Gora}.

For this map the expansion rate at a fixed point  $x_0$ is $\lambda=|DT(x_0)|$. The EI for an observable maximized at a fixed point is
$\theta=1-\frac{1}{\lambda}$. At a periodic point of prime period $q$ the corresponding quantities are $\lambda=|DT(x_0)|^q$ and again $\theta=1-\frac{1}{\lambda}$.

\subsubsection{ (B) Hyperbolic toral automorphisms }

We consider hyperbolic toral automorphism of the two-dimensional torus $\T^2$ induced by a matrix 
\[
T = 
\begin{pmatrix}
	a & b \\
	c & d \\
\end{pmatrix}
\]
with integer entries, det(T)= $\pm 1$ and no eigenvalues on the unit circle.
In order to simplify the discussion and proofs, we will assume both eigenvalues are positive.
These maps preserve Haar measure $\mu$ on $\T^2$.
A canonical example  is the Arnold Cap map
\[
\begin{pmatrix}
	2 & 1 \\
	1 & 1 \\
\end{pmatrix}
\]
$\T^2$ will be considered as the unit square with usual identifications with universal cover $\R^2 $.
T preserves the Haar measure $\mu$ on $\T^2$ and has exponential decay of correlations for Lipschitz functions, in the sense that there exists $\rho \in (0,1)$, such that
\[
\abs{ \int \phi \circ T^n \psi d\mu - \int \phi d\mu \int \psi d\mu} \le C \lVert {\phi}\rVert_{Lip} \lVert{\psi}\rVert_{Lip} \rho^n
\]
where C is a constant independent of $\phi, \psi$ and $\lVert.\rVert_{Lip}$ is the Lipschitz norm. For more details we
refer to~\cite[Chapter 3]{Mane}.

For these maps the expansion rate at a fixed point $x_0$ is $\lambda=\lambda_u$, the eigenvalue associated to the unstable direction. The EI for an 
observable maximized at a fixed point is $1-\frac{1}{\lambda_u}$.   At a periodic point of prime period $q$ the corresponding quantities are $\lambda=\lambda_u^q$ and  as before
$\theta=1-\frac{1}{\lambda}$. 

\subsubsection{(C) Coupled systems of uniformly expanding maps. }

Next, we consider a simple class of coupled mixing expanding maps of $[0,1]$, similar to those examined in ~\cite{sandro_coupled}.
 For $\beta>0$ let  $T_{\beta} :[0,1]\to [0,1]$ be defined by $T_{\beta}(x)=\beta x$,(modulo $1$). Such beta-transformations possess an     absolutely continuous invariant probability measure $\mu_{\beta}$
 with density $h_{\beta}$ which is of bounded variation and bounded away from zero.  

 We define an all-to-all coupled system, $F: [0,1]^m \to [0,1]^m$, by
\[
F(x_1, x_2, \dots, x_m) := ( F_1(x_1, x_2, \dots, x_m),\dots,F_m(x_1, x_2, \dots, x_m)),
\] 
where
\begin{equation}
	F_j (x_1, x_2, \dots, x_m) = (1 - \gamma)T_{\beta}(x_j) + \frac{\gamma}{m} \sum_{k=1}^{m} T_{\beta}(x_k),
\end{equation}
for $j \in [1,\dots,m]$. $0<\gamma<1$ is the coupling constant.  All subspaces of the form $x_{i_1} =x_{i_1}=\ldots =x_{i_j}$ are invariant synchrony sets.

For these maps, if $\gamma>0$ is sufficiently small D. Faranda, H. Ghoudi, P. Guirard and  S. Vaienti~\cite{sandro_coupled} have shown:
\begin{enumerate}
	\item there exists a mixing invariant measure $\mu$ with density $\tilde{h}$, bounded above and below away from zero;
	\item the system has  exponential mixing for Lipschitz functions versus $L^{\infty}$ functions.
\end{enumerate}

	 Subspaces of the form $x_{i_1} =x_{i_1}=\ldots =x_{i_j}$, $i_1<i_2 <\ldots < i_j\le m$,  are repelling invariant synchrony sets. For this class of examples the invariant set $S$ 
	 will be taken to be the hyperplane of synchrony $x_1=x_2=\ldots =x_m$ and in this setting  $\lambda=(1-\gamma)\beta$ is the expansion in the 
	 directions orthogonal to $S$ at the point $x\in S$.   The extremal index is given by  $\theta =1- \frac{1}{[(1-\gamma)\beta]^{m-1}}$.  Note that the EI depends upon $m$, the number of cells in the coupled system, and tends to zero as $m$ increases.

\subsection{ Main Results.}

We suppose that $S$ is a subset of $X$, where $(X,\mu)$ is a measure and metric space, and that $S$ has good regularity properties, for example: a point; straight line segment or synchrony subspace.
The main phenomena are illustrated by considering 2  classes of observables, giving the Weibull and Fr\'echet case.
\begin{itemize}
\item[(a) ]  Frech\'et case  $\phi (x)=d(x,S)^{-\alpha}$ ($\alpha >0$);
\item[(b) ] Weibull case  $\phi (x)=C-d(x,S)^{-\alpha}$, where $C>0, \alpha<0$.
\end{itemize}

In applications daily rainfall is typically modeled by a Fr\'echet distribution and daily temperatures by a Weibull distribution. We do not consider the Gumbel case  $\phi (x)=-\ln d(x,S) $ as it does not arise in the applications we consider.
One goal of this paper is to determine to what extent the GEV for a time series of observations $\{\phi (T^j x)\}$ determines the GEV for a derived time series of functionals of the observations $\{ \Phi (\phi (T^j x) )\}$.
The functionals we consider, the minimum and time-averages of a series of $k$ observations, in applications model phenomena such as heat waves and prolonged spells of excessive rainfall.

\section{A scaling lemma.}

We now compare the GEV scaling constants for an observable $\phi$ for the maximal process $M_n=\max \{\phi, \phi\circ T, \ldots, \phi \circ T^{n-1}\}$ with those of the functional $Y=\min \{\phi, \phi\circ T, \ldots, \phi\circ T^{k-1}\} $ or $Y=\frac{1}{k}\sum_{j=0}^{k-1} \phi \circ T^j$
with corresponding  maximal process $B_n=\max_{1\le j\le n-1} Y_j$.

We first consider the scaling in the case of a Fr\'echet distribution, a setting in which the scaling constants $b_n$  in a linear scaling are zero and then the Weibull.
The constant $g(k,T)$ in the lemma below is a constant depending upon $k$ and the map$T$. One of the goals of this paper is to calculate $g(k,T)$ in some
simple examples. 

\begin{lemma}\label{main}

(a) Suppose $\phi$ has a Fr\'echet distribution and 
\[
\mu (M_n \le u_n (\tau) )\to e^{-\theta_1 \tau}
\]
and
\[
\mu (B_n \le w_n (\tau) ) \to e^{-\theta_2 \tau}
\]

If 
\[
w_n (\tau) = g(k,T) u_n (\tau),
\]
then 
\[
\xi_2=\xi_1
\]
\[
\mu_2=g(k,T) \mu_1 \bigl(\frac{\theta_2}{\theta_1}\bigr)^{\xi_1}.
\]
and
\[
\sigma_2=g(k,T) \sigma_1 \bigl(\frac{\theta_2}{\theta_1}\bigr)^{\xi_1}.
\]

(b) Suppose $\phi$ has a Weibull  distribution and

\[
\mu (M_n \le u_n (\tau) )\to e^{-\theta_1 \tau}
\]
\[
\mu (B_n \le w_n (\tau) ) \to e^{-\theta_2 \tau}
\]
Let  $u_n(\tau)=a_n \rho (\tau) + C$,  $w_n(\tau)=\alpha_n \rho (\tau) +C$
and $\alpha_n =g(k,T) a_n$. 
Then 
\[
\xi_2=\xi_1
\]
\[
\mu_2=\frac{\sigma_1}{\xi_1} (g(k,T)-1)+\mu_1 -\frac{g(k,T)\sigma_1}{\xi_1} (1-(\frac{\theta_2}{\theta_1})^{\xi_1})
\]
and
\[
\sigma_2=g(k,T) \sigma_1 \bigl(\frac{\theta_2}{\theta_1}\bigr)^{\xi_1}
\]
where $g(k,T)$ is a constant depending on $k$ and the dynamical system $(T,X,\mu)$.
\end{lemma}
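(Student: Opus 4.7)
The plan is to translate the two convergence statements into equivalent GEV limits, and then read off the parameters of $B_n$ by matching functional forms against the GEV of $M_n$.

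\textbf{Step 1 (inversion).} The convergence $\mu(M_n \le u_n(\tau)) \to e^{-\theta_1 \tau}$, combined with the standard block-maxima convergence $\mu(M_n\le y) \to G_1(y;\xi_1,\mu_1,\sigma_1)$, forces the identity
\[
\theta_1\,\tau(y) = \left(1+\xi_1\,\frac{y-\mu_1}{\sigma_1}\right)^{-1/\xi_1},
\]
where $\tau(y) := u_n^{-1}(y)$ in the asymptotic sense (the monotonicity of $u_n(\cdot)$ follows from $n\mu(\phi>u_n(\tau))\to\tau$). The analogous identity for the derived process reads $G_2(y) = e^{-\theta_2 w_n^{-1}(y)}$, and the task is to make the right-hand side match a GEV with parameters $(\xi_2,\mu_2,\sigma_2)$.

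\textbf{Step 2 (Fr\'echet).} In the pure Fr\'echet regime the scaling has $b_n = 0$, equivalently $\mu_1 = \sigma_1/\xi_1$, so the hypothesis $w_n(\tau) = g(k,T)\,u_n(\tau)$ translates directly into $w_n^{-1}(y) = u_n^{-1}(y/g(k,T))$. Substituting and using Step 1 gives
\[
G_2(y) = \exp\!\left(-\frac{\theta_2}{\theta_1}\left(1+\xi_1\,\frac{y/g(k,T)-\mu_1}{\sigma_1}\right)^{-1/\xi_1}\right).
\]
The only way this can equal a GEV $\exp(-(1+\xi_2(y-\mu_2)/\sigma_2)^{-1/\xi_2})$ is for the outer powers to agree, forcing $\xi_2=\xi_1$. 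Then equating the coefficient of $y$ inside the parenthesis yields $\sigma_2 = g(k,T)\sigma_1(\theta_2/\theta_1)^{\xi_1}$, and equating the constant term, together with $\mu_1 = \sigma_1/\xi_1$, yields $\mu_2 = g(k,T)\mu_1(\theta_2/\theta_1)^{\xi_1}$.

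\textbf{Step 3 (Weibull).} The scaling is now affine, with additive shift $C = \esssup \phi$ tied to the GEV parameters by $C = \mu_1 - \sigma_1/\xi_1$. From $\alpha_n = g(k,T)\,a_n$ one gets $w_n(\tau) - C = g(k,T)(u_n(\tau)-C)$, hence $w_n^{-1}(y) = u_n^{-1}\!\bigl(C+(y-C)/g(k,T)\bigr)$. Inserting this into the formula for $G_2$ from Step 1 and using the cancellation $1+\xi_1(C-\mu_1)/\sigma_1 = 0$, the base of the $-1/\xi_1$ power collapses to $\xi_1(y-C)/(g(k,T)\sigma_1)$. Matching with the GEV form once more forces $\xi_2=\xi_1$ and $\sigma_2 = g(k,T)\sigma_1(\theta_2/\theta_1)^{\xi_1}$; the constant term, after replacing $C$ by $\mu_1-\sigma_1/\xi_1$ and rearranging, produces the formula for $\mu_2$ quoted in part (b).

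\textbf{Main obstacle.} The argument reduces to coefficient-matching between two linear functions of $y$ raised to the same negative power, but the Weibull bookkeeping is delicate: one must keep $C$ strictly distinct from $\mu_1$, invoke the identity $C-\mu_1 = -\sigma_1/\xi_1$ at exactly the right moment to kill the constant inside the $-1/\xi_1$ power, and then re-expand using the same identity to repackage the answer in the compact form of the lemma. A minor secondary point is justifying that $u_n$ and $w_n$ may be treated as invertible in $\tau$ in the relevant range, which follows from their construction.
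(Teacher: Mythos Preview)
Your proposal is correct and follows essentially the same route as the paper: both arguments reduce to identifying the GEV parameters of $B_n$ by substituting the scaling relation into the limit law and matching against the standard GEV form, using the key identities $\mu_1=\sigma_1/\xi_1$ (Fr\'echet) and $C=\mu_1-\sigma_1/\xi_1$ (Weibull). The only cosmetic difference is that the paper decomposes the computation into two off-the-shelf GEV transformation rules (argument rescaling, then the power law $G\mapsto G^{\theta_2/\theta_1}$ via \cite[Theorem 5.2]{Coles}) rather than carrying out a single direct coefficient match as you do.
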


\begin{remark}
In our applications a key calculation is to determine $g(k,T)$ and in particular how this function scales with $k$.
\end{remark}
\begin{remark}
In the second part of the proof below, the Weibull case, it is useful to have an example in mind. Suppose $\phi=C-d(x,x_0)^{\alpha}$ where $x_0\in [0,1]$
equipped with Lebesgue measure $\mu$. It is easy to calculate that the scaling is $\mu( M_n \le C-\frac{t^{\alpha}}{n^{\alpha}}) \to e^{-t}$. Let $\rho=-(t)^{\alpha}$
and $a_n =n^{-\alpha}$ (as $a_n >0$, $\rho<0$  in the standard form for the  Weibull distribution). Then $\mu (M_n \le C +a_n \rho)\to e^{- (-\rho)^{\frac{1}{\alpha}}}$ so that $\alpha=-\xi$.  
We then let $z=a_n\rho+C$ and obtain $\mu (M_n \le z)\to e^{- (-[z-C]/a_n)^{\frac{1}{\alpha}}}$. Finally we compare the expression $e^{- (-[z-C]/a_n)^{\frac{1}{\alpha}}}$
to the standard GEV to estimate the scale, location and shape parameters for fixed $n$.
\end{remark}
\noindent {\bf Proof:}

Assume $\phi$ has a Fr\'echet distribution, then $b_n=0$.  To obtain linear scaling for the sequence $u_n(\tau)$  we write $t=\rho(\tau)=\tau^{- \xi_1}$  where $\xi_1>0$ is the shape parameter for the 
limiting distribution of $M_n$. Thus  $u_n (\tau)=a_n t$ and $w_n (\tau)=g(k,T)a_n\rho(\tau)$. For simplicity of exposition we define $\alpha_n=g(k,T)a_n$.
We obtain
\[
\mu (M_n \le a_n t)\to e^{-\theta_1 t^{-\frac{1}{\xi_1}}}:=G^{\theta_1}(t)
\]
\[
\mu (B_n \le \alpha_n t)\to e^{-\theta_2 t^{-\frac{1}{\xi_1}}}:=G^{\theta_2}(t)
\]

So for fixed large $n$, 
\[
\mu (M_n \le t)\sim G^{\theta_1} (t/a_n)
\]
and
\[
\mu (B_n \le t)\sim G^{\theta_2} (t/\alpha_n)
=\left[G^{\theta_1}\bigl(\frac{t}{a_n}\cdot(\frac{a_n}{\alpha_n}\bigr)\bigr)\right]^{\frac{\theta_2}{\theta_1}}
\]

Note that if $H(t)$ is a generalized EVT distribution with parameters $(\mu, \sigma, \xi)$ then $H(\gamma t)$, $\gamma>0$,  has the same shape parameter $\xi_{\gamma}=\xi$ while $\sigma_{\gamma}=\frac{\sigma}{\gamma}$, $\mu_{\gamma}=\frac{\mu}{\gamma}$. 

Thus if $G^{\theta_1} (t/a_n)$ has parameters $\xi_1$, $\mu_1$ and $\sigma_1$ then $G^{\theta_1}((t/a_n)\cdot(\frac{a_n}{\alpha_n}))$
has GEV parameters $\xi^{'}_2=\xi_1$, $\sigma^{'}_2=g(k,T)\sigma_1$ and $\mu_2^{'}=g(k,T)\mu_1$.

Furthermore it is known~\cite[Theorem 5.2, Page 96]{Coles} that if $G(t)$ has GEV parameters $\xi$, $\mu$ and $\sigma$ then
$G^{\theta}$ has GEV parameters $\xi_{\theta}=\xi$, $\mu_{\theta}=\mu-\frac{\sigma}{\xi}(1-\theta^{\xi})$ and $\sigma_{\theta}=\sigma\theta^{\xi}$.

Thus $[G^{\theta_1}((t/a_n)\cdot(\frac{a_n}{\alpha_n})]^{\frac{\theta_2}{\theta_1}}]$ has GEV parameters $\xi_2=\xi_1$, $\sigma_2=g(k,T) \sigma_1(\frac{\theta_2}{\theta_1})^{\xi_1}$
and $\mu_2=g(k,T)\mu_1-\frac{g(k,T) \sigma_1}{\xi_1}(1-(\theta_2/\theta_1)^{\xi_1})$.

Comparing the two forms of $G^{\theta_1}(\frac{t}{a_n})$, namely $e^{-\theta_1(\frac{t}{a_n})^{-\frac{1}{\xi_1}}}$ and $\exp\bigg\{-\theta_1\bigg[1 + \xi_1\bigg( \frac{t-\mu_1}{\sigma_1}\bigg)\bigg]^{\frac{-1}{\xi_1}}\bigg\}$, we see that the relation $1 = \frac{\xi_1 \mu_1}{\sigma_1}$ holds.
Hence,
\[ 
\mu_2=g(k,T)\mu_1-\frac{g(k,T) \sigma_1}{\xi_1}\bigl(1-\bigl(\frac{\theta_2}{\theta_1}\bigr)^{\xi_1}\bigr)= g(k,T) \mu_1 \bigl(\frac{\theta_2}{\theta_1}\bigr)^{\xi_1}.
\]
This concludes the proof in the case of a Fr\'echet distribution.

Now suppose that $\phi$ has a Weibull distribution,  a setting in which the scaling constants $b_n$  may be taken as $C$, the supremum of the range of $\phi$.
 To obtain a linear scaling for the sequence $u_n(\tau)$  we write $t=\rho(\tau)=-\tau^{- \xi_1}$  where $\xi_1<0$ is the shape parameter for the 
limiting distribution of $M_n$. Thus  $u_n (\tau)=a_n t+C$ and $w_n (\tau)=\alpha_n t +C$ where $\alpha_n =g(k, T) a_n$ by assumption. 
We obtain
\[
\mu (M_n \le a_n t +C)\to e^{-\theta_1 (-t)^{-\frac{1}{\xi_1}}}:=G^{\theta_1}(t)
\]
\[
\mu (B_n \le \alpha_n t + C)\to e^{-\theta_2 (-t)^{-\frac{1}{\xi_1}}}:=G^{\theta_2}(t)
\]

We now put the distribution into the standard form of the GEV. 
So for fixed large $n$, let $z=a_n t+C$, so that 
\[
\mu (M_n \le z)\sim G^{\theta_1} \biggl(\frac{z-C}{a_n}\biggr)
\]
and similarly 
\[
\mu (B_n \le z)\sim G^{\theta_2} \biggl(\frac{z-C}{\alpha_n}\biggr)
=\left[G^{\theta_1}\biggl(\frac{z-C}{\alpha_n}\biggr)\right]^{\frac{\theta_2}{\theta_1}}
.\]
In $G^{\theta_1}$ we have 
\[
1+\frac{\xi_1}{\sigma_1}(z-\mu_1)=\frac{-(z-C)}{a_n}
\]
which gives the relations $1-\frac{\xi_1\mu_1}{\sigma_1}=\frac{C}{a_n}$ and 
$\frac{\xi_1}{\sigma_1}=\frac{-1}{a_n}$.
Similarly if $\alpha_n=g(k,T)a_n$ replaces $a_n$ in $G^{\theta_1}$ we have, as the shape parameter $\xi_1$ does not change, 
$1-\frac{\xi_1\mu_2^{'}}{\sigma_2^{'}}=\frac{C}{g(k,T) a_n}$ and 
$\frac{\xi_1}{\sigma_2^{'}}=\frac{-1}{g(k,T)a_n}$. We obtain $\sigma_2^{'}=g(k,T)\sigma_1$
and $\mu_2^{'}=\frac{\sigma_1}{\xi_1}(g(k,T)-1) +\mu_1$. 

Now we account for the transformation $G^{\theta_1} \to G^{\theta_2}$  by considering the parameters in $[G^{\theta_1}]^{\frac{\theta_2}{\theta_1}}$.
As in the Fr\'echet case we use the fact that if  $G(t)$ has GEV parameters $\xi$, $\mu$ and $\sigma$ then
$G^{\theta}$ has GEV parameters $\xi_{\theta}=\xi$, $\mu_{\theta}=\mu-\frac{\sigma}{\xi}(1-\theta^{\xi})$ and $\sigma_{\theta}=\sigma\theta^{\xi}$.

Hence the parameters for $B_n$ are $\sigma_2=g(k,T) \sigma_1(\frac{\theta_2}{\theta_1})^{\xi_1}$ and $\mu_2=\frac{\sigma_1}{\xi_1} (g(k,T)-1)+\mu_1 -\frac{g(k,T)\sigma_1}{\xi_1} (1-(\frac{\theta_2}{\theta_1})^{\xi_1})$.

\section{$k$-exceedances: $\phi (x)$ maximized at an invariant set.}

We assume that $S$ is a repelling fixed point $x_0$ in the case of (A) or (B)  or repelling invariant hyperplane of synchrony in the case of $(C)$. 
Let 
\[
Y(x)=\min \{\phi(x), \ldots, \phi (T^{k-1} (x))\}
\]
where $\phi (x)=d(x,S)^{-\alpha}$ ($\alpha>0$) in the Fr\'echet case or $\phi (x)=C-d(x,S)^{-\alpha}$ ($\alpha <0$) in the Weibull case .
Recall $M_n (x): =\max \{\phi (x), \phi (Tx), \ldots, \phi (T^{n-1} x)\}$ and $B_n=\max \{Y (x), \ldots, Y(T^{n-1} (x))\}$.

\begin{theorem}\label{theorem_invariant}
In the setting of $(A)$,  $\lambda=|DT(x_0)|$;  in the setting of $(B)$, $\lambda=\lambda_u$, the eigenvalue in the expanding direction;
and in the case of $(C)$, $\lambda=(1-\gamma)\beta$.

(a) Suppose $\phi$ has a Fr\'echet distribution.
If $M_n$ has GEV with parameters $\xi_1$, $\sigma_1$ and $\mu_1$ then $B_n$ has GEV with parameters $\mu_{2}=\lambda^{-(k-1)\alpha}\mu_1$, $\sigma_{2}= \lambda^{-(k-1)\alpha}\sigma_1$ and $\xi_{2}=\xi_1=\xi$.

(b) Suppose $\phi$ has a Weibull distribution.
If $M_n$ has GEV with parameters $\xi_1$, $\sigma_1$ and $\mu_1$ then $B_n$ has GEV with parameters $\mu_{2}=\mu_1$, $\sigma_{2}= \lambda^{-(k-1)\alpha}\sigma_1$ and $\xi_{2}=\xi_1=\xi$.

\end{theorem}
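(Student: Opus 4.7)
The plan is to reduce both parts of the theorem to Lemma~\ref{main} by establishing two ingredients: (i) a linear scaling of the form $w_n(\tau) = g(k,T) u_n(\tau)$ (Fr\'echet) or $\alpha_n = g(k,T) a_n$ (Weibull) with the explicit value $g(k,T) = \lambda^{-(k-1)\alpha}$; and (ii) the equality of extremal indices, $\theta_2 = \theta_1$. Granted these, the Fr\'echet case is immediate from Lemma~\ref{main}(a), since $(\theta_2/\theta_1)^{\xi_1}=1$ kills the correction and leaves $\mu_2 = g(k,T)\mu_1$, $\sigma_2 = g(k,T)\sigma_1$. For the Weibull case, Lemma~\ref{main}(b) gives $\sigma_2 = g(k,T)\sigma_1$ directly, while the formula for $\mu_2$ collapses to $\mu_1 + (\sigma_1/\xi_1)(g(k,T)-1)$; the correction term is asymptotically negligible because, as computed in the proof of Lemma~\ref{main}(b), $\sigma_1/\xi_1 = -a_n \to 0$ as $n\to\infty$ in the Weibull regime, so the conclusion $\mu_2 = \mu_1$ holds in the limit.

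For the scaling (i) I would start from the preimage identity
\[
\{Y > u\} = \bigcap_{j=0}^{k-1} T^{-j}\{\phi > u\}.
\]
In local linearizing coordinates at $S$, $T$ acts by multiplication by $\lambda$ in each direction transverse to $S$, so $T^{-j}\{\phi > u\}$ is a thin tube around $S$ whose transverse radius has been contracted by $\lambda^{-j}$ relative to the tube $\{\phi > u\}$. The intersection over $j = 0, \ldots, k-1$ is therefore a tube of transverse radius $\lambda^{-(k-1)}$ times that of $\{\phi > u\}$. Since $\mu$ has bounded density with respect to Lebesgue, this yields the leading-order relation $\mu(Y > u) \sim \lambda^{-(k-1)s}\, \mu(\phi > u)$, where $s$ is the codimension of $S$ in $X$. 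Combined with the tail $\mu(\phi > u) \sim c\, u^{-s/\alpha}$ in the Fr\'echet case, inverting the defining relation $n\mu(Y > w_n(\tau)) \to \tau$ produces $w_n(\tau) = \lambda^{-(k-1)\alpha}\, u_n(\tau)$. The Weibull case is analogous, working with $C-u$ in place of $u$ and the tail $\mu(\phi > u) \sim c (C-u)^{s/|\alpha|}$, yielding $\alpha_n = \lambda^{-(k-1)\alpha}\, a_n$ in the linear form $u_n(\tau) = a_n \rho(\tau) + C$.

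For the extremal-index identity (ii), I would apply Proposition~\ref{prop.ei-comp} to the derived process $(Y \circ T^j)$. The mixing condition $\DD_q(w_n)$ follows from exponential decay of correlations against a suitable regularization (BV in case (A), Lipschitz in cases (B) and (C)) of the indicator of the thin tube $\{Y > w_n\}$, and the non-recurrence condition $\DD'_q(w_n)$ follows from the transverse expansion at $S$, both exactly as for the base observable in~\cite{CHN}. To then compute $\theta_2 = \lim \mu(A_n^{(q)})/\mu(\{Y > w_n\})$, observe that conditional on $Y > w_n$ at time $0$ (so the orbit sits in $\{\phi > w_n\}$ at every time $0,1,\ldots,k-1$), the event $Y > w_n$ at time $1$ requires the orbit to remain in $\{\phi > w_n\}$ also at time $k$; given that it is already in the tube at time $k-1$, the conditional probability of this one further visit is $\lambda^{-s}$ by the single-step transverse contraction ratio, independently of $k$. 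Hence $\theta_2 = 1 - \lambda^{-s} = \theta_1$. The main technical obstacle is pinning down the constants in $\mu(Y > u) \sim \lambda^{-(k-1)s}\mu(\phi > u)$ and in the $\DD_q / \DD'_q$ estimates precisely enough to identify $g(k,T)$ exactly rather than only up to a bounded factor; both steps should reduce to quantitative bookkeeping in the linearization-and-density arguments of~\cite{CHN}, applied to the slightly shrunken tube $\{Y > w_n\}$ in place of $\{\phi > u_n\}$.
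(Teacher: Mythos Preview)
Your proposal is correct and follows the paper's route: identify $g(k,T)=\lambda^{-(k-1)\alpha}$ from the geometry of $\{Y>u\}$ near $S$, observe $\theta_2=\theta_1$, defer the verification of $\DD_q,\DD'_q$ to \cite{CHN}, and invoke Lemma~\ref{main}. The paper phrases the scaling step as the pointwise relation $Y(x)=\phi(T^{k-1}x)\sim\lambda^{-(k-1)\alpha}\phi(x)$ (so that $\{Y>w\}=\{\phi>\lambda^{(k-1)\alpha}w\}$) rather than via your preimage-intersection measure estimate, but these are equivalent; your explicit treatment of the Weibull location parameter---using $\sigma_1/\xi_1=-a_n\to 0$ to kill the correction $(\sigma_1/\xi_1)(g(k,T)-1)$---actually spells out a step the paper's proof leaves implicit.
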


\begin{proof}
We focus on the Fr\'echet case. The Weibull case follows the same argument.
Define $w_n (\tau)$ by $n\mu (Y > w_n(\tau))=\tau$.
It is easy to see that $Y (x)$ is maximized for those points such that $x$ is closest to $x_0$ or $S$ and then $Y (x)=\min \{\phi (x), \phi (Tx), \ldots, \phi (T^{k-1} x) \}=\phi (T^{k-1} x)\sim \lambda^{-(k-1)\alpha} \phi (x)$ by the assumption of uniform repulsion. Thus the set $(Y>w_n(\tau))$ is a scaled version of and has the same shape as that of $(\phi>u_n (\tau))$. Hence the  proofs of $\DD (u_n)$  and $\DD_q^{'} (u_n)$ proceed exactly as in the proofs of Theorem 2.1 and Theorem 2.8 in~\cite{CHN}.

The relation $n\mu (Y_k > w_n(\tau)  )=\tau$ implies $n\mu (\phi (x)  > \lambda^{(k-1)\alpha} w_n(\tau)  )=\tau$ and hence $w_n (\tau)=u_n (\tau) \lambda^{-(k-1)\alpha}$.

The extremal index of $Y$ and the extremal index of $\phi$ are both easily calculated and equal to $\theta=1-\frac{1}{\lambda}$ in the case of $(A)$, $(B)$ and equal to 
$1-\frac{1}{\lambda^{m-1}}$ in the case of $(C)$~\cite{CHN}. 
Thus Lemma~\ref{main} concludes the proof.

\end{proof}

\section{$k$-exceedances: $\phi (x)$ maximized at a generic non-recurrent point.}

We first consider the Fr\'echet case.
Consider  $Y(x) =\min \{ \phi (x), \ldots, \phi (T^{k-1} x) \}$, where $\phi (x)= d(x,x_0)^{-\alpha}$ and $\alpha>0$.
Suppose further that  $x_0$ is non-recurrent. 
Then there exists a $\delta>0$ such that at least one of the iterates $T^j x$, $j=0,\ldots, k-1$ satisfies $d(T^j  x, x_0)>\delta$ and hence $Y(T^n x)$ is uniformly bounded.
Thus the process $Y(T^n(x))$ is in the domain of attraction of a Weibull distribution rather than a 
Fr\'echet distribution.
In general the form of the Weibull distribution cannot be discerned readily from $\phi$, 
except for some special cases. We illustrate with an example.

\begin{example} Consider the doubling map $T(x)=2x\mod 1$, $x\in[0,1]$ local observable
$\phi(x)=d(x,x_0)^{-\alpha}$ and functional $Y(x)=\min\{\phi(x),\phi(Tx)\}.$ Then
the process $B_n(x)=\max_{k\leq n}Y(T^kx)$ follows a Weibull law with tail index of $-1$,  (which is indeed independent of $\alpha$).
To show this, suppose without loss of generality
that $x_0\in(0,1/2)$. By elementary analysis,
it can be shown that $Y(x)$ has global maximum $3^{\alpha}x^{-\alpha}_0$ at value $x=\frac{2x_0}{3}$. Within a local neighbourhood of this maximum, $Y(x)$ is piecewise smooth,
and therefore the level set $\{Y>3^{\alpha}x^{-\alpha}_0-w\}$
has measure asymptotic to $c_{\alpha}w$, for some $c_{\alpha}>0$ and $w\to 0$. Thus,
$Y(x)$ is in the domain of attraction of the Weibull distribution with tail index -1.
Since the level sets for $\{Y>w_n(\tau)\}$ are shrinking intervals,  
the relevant  $\DD (u_n)$  and $\DD_q^{'} (u_n)$ easily hold~\cite{FFT3}. Here, $q$ is chosen according
to the recurrence properties of the orbit of $\frac{2x_0}{3}$ (e.g. periodic versus non-periodic). 
Hence $B_n(x)$ follows a (Weibull) GEV.
\end{example}

We have a similar (non)-result in  the Weibull case. The tail index for $Y$ need not be the same as that of $\phi$ for the same reason as in the Fr\'echet case, though the distribution will also be a Weibull distribution but not that much more can be said in any generality. 

\section{ $k$-averages: $\phi (x)$ maximized at a non-recurrent point. }

In the Weibull case there is little useful that can be said in any generality, except that  $Y$ will also have a Weibull distribution but possibly with a different shape parameter. 
Hence we focus on the  Fr\'echet case. Assume in case $(A)$, $(B)$ or $(C)$  that  $\phi(x)=d(x,x_0)^{-\alpha}$, $\alpha>0$, where $x_0$ is non-recurrent in the sense
of Collet~\cite{Collet}.
In the Fr\'echet or heavy-tailed case the time average is dominated by the maximum value, so we should expect a simple relation for the GEV parameters 
of $M_n=\max_{j\le n} \{\phi (T^j x)\}$ and those of $B_n=\max_{j\le n} \{Y_j\}$ where  $Y_j (x)=\frac{1}{k}\sum_{i=j}^{j+k-1} \phi (T^i x)$.
If $\xi$, $\sigma$ and $\mu$ are the shape, scale and location parameters of $M_n$ then $\xi$, $\frac{\sigma}{k}$ and $\frac{\mu}{k}$ are the shape, scale and location parameters of $B_n$. Assume that  $\phi(x)=d(x,x_0)^{-\alpha}$, $\alpha>0$, where $x_0$ is non-recurrent.

\begin{theorem}\label{thm.avg_generic}
Assume in case $(A)$, $(B)$ or $(C)$  that  $\phi(x)$ is a Fr\'echet observable of form $\phi(x)=d(x,x_0)^{-\alpha}$, $\alpha>0$, where $x_0$ is non-recurrent in the sense
of Collet~\cite{Collet}.
Let  $M_n=\max_{j\le n} \{\phi (T^j x)\}$ and  $B_n=\max_{j\le n} \{Y_j\}$ where  $Y_j (x)=\frac{1}{k}\sum_{i=j}^{j+k-1} \phi (T^i x)$.
If $M_n$ has GEV with parameters $\xi_1$, $\sigma_1$ and $\mu_1$ then $B_n$ has GEV with parameters $\mu_2=\frac{\mu_1}{k}$, 
 $\sigma_{2}=\frac{\sigma_1}{k}$ and $\xi_{2}=\xi_1=\xi$.
\end{theorem}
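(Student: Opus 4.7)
The plan is to apply Lemma~\ref{main}(a) with scaling constant $g(k,T) = k^{\xi_1-1}$ and extremal indices $\theta_1 = 1$, $\theta_2 = 1/k$. The underlying heavy-tail heuristic is that, within a $k$-window, the average $Y_j$ is dominated with high probability by its single largest summand, so morally $B_n \approx M_n/k$; all the scaling ultimately comes from this one observation.

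First I would analyse the tail of $Y_0$. Because $x_0$ is non-recurrent in the sense of Collet~\cite{Collet}, for each $i \in \{1,\dots,k-1\}$ the iterate $T^i x_0$ is bounded away from $x_0$, so $\phi\circ T^i$ is uniformly bounded on a small neighbourhood of $x_0$. An annular decomposition of such a neighbourhood, as in~\cite{Collet,CHN}, shows that for distinct $i,j \in \{0,\dots,k-1\}$ the level sets $\{\phi\circ T^i > u\}$ and $\{\phi\circ T^j > u\}$ intersect in a set of $\mu$-measure $o(\mu(\phi > u))$. Combining this with $T$-invariance of $\mu$, together with the uniform bound on the remaining summands, gives $\mu(Y_0 > w) \sim k\mu(\phi > kw)$ as $w\to\infty$. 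Inverting this asymptotic and using the Fr\'echet scaling $u_n(\tau) \propto \tau^{-\xi_1}$ forces $w_n(\tau) = k^{\xi_1-1}\, u_n(\tau)$, i.e.\ $g(k,T) = k^{\xi_1-1}$.

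Next I would verify the dynamical conditions for the $Y$-process. Since the level set $\{Y > w\}$ is, up to a set of smaller measure, the disjoint union $\bigsqcup_{i=0}^{k-1} T^{-i}\{\phi > kw\}$, the conditions $\DD_q(w_n)$ and $\DD'_q(w_n)$ for $(Y\circ T^j)$ are inherited from the corresponding conditions for $\phi$ established in~\cite{CHN} for each of the settings $(A), (B), (C)$. The extremal index for $\phi$ at a non-recurrent point is $\theta_1 = 1$ by Collet. To obtain $\theta_2$ I would apply Proposition~\ref{prop.ei-comp}: a single extreme excursion of $\phi$ at time $i^*$ produces exceedances of $Y$ at the $k$ consecutive times $j = i^*-k+1,\dots,i^*$. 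Consequently, for $q \geq k-1$, the event $A_n^{(q)}=\{Y_0 > w_n,\,Y_1 \leq w_n,\dots,Y_q \leq w_n\}$ is compatible only with $i^*=0$, so $\mu(A_n^{(q)}) \sim \mu(\phi > u_n)$, while $\mu(U_n) \sim k\mu(\phi > u_n)$ by Step~1. Hence $\theta_2 = 1/k$.

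Plugging $g(k,T)=k^{\xi_1-1}$, $\theta_1=1$ and $\theta_2 = 1/k$ into Lemma~\ref{main}(a) yields $\xi_2=\xi_1$, $\sigma_2 = k^{\xi_1-1}\cdot k^{-\xi_1}\sigma_1 = \sigma_1/k$, and $\mu_2 = k^{\xi_1-1}\cdot k^{-\xi_1}\mu_1 = \mu_1/k$, as claimed. The main technical obstacle is Step~1: making the near-disjointness of the $k$ extreme events rigorous in each of the settings $(A), (B), (C)$ and controlling the correction coming from the bounded small summands in $Y_0$. Non-recurrence of $x_0$ in Collet's sense is the essential input, and uniform upper and lower density bounds on $\mu$ allow the estimates to be made explicit.
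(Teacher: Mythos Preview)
Your proposal is correct and follows essentially the same route as the paper: you identify the tail asymptotic $\mu(Y_0>w)\sim k\,\mu(\phi>kw)$ from non-recurrence of $x_0$, deduce $g(k,T)=k^{\xi_1-1}$, compute the extremal index $\theta_2=1/k$ via the observation that a single $\phi$-exceedance at time $i^*$ generates exactly $k$ consecutive $Y$-exceedances (so $A_n^{(q)}$ singles out $i^*=0$), and conclude with Lemma~\ref{main}(a). The paper organises the extremal-index computation through the sets $V_n^0=B_{r(u_n)}(x_0)$ and $V_n^-=\bigcup_{i=1}^{k-1}T^{-i}B_{r(u_n)}(x_0)$ and the ratio $\mu(V_n^0)/(\mu(V_n^0)+\mu(V_n^-))=1/k$, which is exactly your cluster argument in different notation.
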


\begin{proof}
Let $k\ge$ and define  the time-average 
\[
Y_i(x)=\frac{1}{k}\sum_{j=i}^{i+k-1} \phi (T^j x).
\]

Let $u_n$ be an increasing sequence.
Let $r(u_n)$ be defined by $B_{r(u_n)}(x_0)=\{x:\phi(x) >u_n\}$. 

Let $V^{-}_n= \cup_{i=1}^{k-1} T^{-i}B_{r(u_n)} (x_0)$ and $V^0_n= B_{r(u_n)} (x_0)$ and note that because of our genericity condition there exists $K$ such that for all large $n$, 
$\phi|_{{V^{-}_n}}\le K$.

We first note that if $u_n \to \infty$ then $Y_1>u_n$ implies that $T^j (x) \in B_{r(ku_n)} (x_0)$ for one (precisely one) $j^*$ such that  $0\le j \le k-1$.
Note that 
\[\frac{1}{k} \sum_{j\not = j^*,0\le j\le k-1}\phi (T^j x) \le M
\]
and by the existence of a density at $x_0$,
\[ 
|\mu (B_{r(M+k u_n)}(x_0))-\mu (B_{r(k u_n)}(x_0))|=o(\mu (B_{r(k u_n)}(x_0))).
\]
Since $T$ preserves $\mu$,  $\mu (T^{-j} B_{r(u_n)} (x_0))=\mu (B_{r(u_n)} (x_0))$ for $0\le j \le k-1$. 
Hence
\[
\frac{\mu (V^0_{n})}{\mu (V^0_n)+\mu (V_n^{-})}=\frac{1}{k}.
\]

Define
\[
M_n =\max \{ \phi (x), \phi (T x), \ldots, \phi (T^{n-1} (x) )\}.
\]
We define a sequence $u_n(\tau)$ by the  requirement that 
\begin{equation}\label{eq.def-u_n-tau}
    n\mu (\phi >u_n(\tau))=\tau.
\end{equation}

It is well known that conditions $D(u_n)$, $D^{'}(u_n)$ of~\cite{LLR} (see for example~\cite{Extremes_Book}) hold for a generic $x_0$ in our dynamical
settings and hence
\[
\mu (M_n \le u_n (\tau)) \to e^{-\tau}.
\]

We will now relate the scaling constants for $B_n:=\max_{j\le n} \{Y_j (x)\}$
where
\[
Y_i (x)=\frac{1}{k}\sum_{j=i}^{i+k-1} \phi (T^j x)
\]
to the scaling constants for $M_n$.

Consider  a sequence $w_n$ such that
\[
n\mu (Y_k>w_n(\tau))=\tau.
\]
Now as $x_0$ is non-recurrent
\begin{align*} \mu (Y_k >w_n(\tau)) 
    &= \mu\left( \sum_{j=0}^{k-1} \phi (T^j x) >kw_n(\tau)\right)\\
    &= k\mu( \phi > k w_n(\tau) ) + o(1/n).\\
\end{align*}

Since $u_n (\tau)$ is defined by the requirement $n\mu (\phi > u_n (\tau))\to \tau$ it is easy to see that 
$u_n(\tau)=n^{\xi}\tau^{-\xi}$.  We see that we have the
relation
\[
w_n (\tau)=\frac{1}{k}\ u_n \bigl(\frac{\tau}{k}\bigr)
\]
and hence $w_n (\tau)=k^{\xi-1} u_n (\tau)$.

It is clear that $\phi$ has extremal index $1$. 
The scheme of the proof of Condition $\DD_q(u_n)$ is itself somewhat standard~\cite{Dichotomy,FHN} and is a consequence of suitable decay of correlation estimates.
Our genericity assumption on $x_0$ establishes  Condition $\DD'_q(u_n)$ in a standard way.
We will now show that $Y$ has an extremal index $\theta=\frac{1}{k}$.
In our setting
\[
\lim_{n\to \infty} \theta_n=\lim_{n\to \infty} \frac{\mu (A_n^{(q)})}{\mu (U_n)}
\]
exists and equals 
\[
\frac{\mu (V_n^0)}{\mu (V_n^0)+\mu (V_n^{-})}=\frac{1}{k}.
\]

Thus we have $g(k,T)=k^{\xi-1}$ and $\theta_2=\frac{1}{k}$. We conclude from Lemma~\ref{main} that
\[
\sigma_2=k^{\xi-1}(\sigma_1(1/k)^{\xi})=\frac{\sigma_1}{k}
\]
and 
\[
\mu_2=k^{\xi-1}(\mu_1(1/k)^{\xi})=\frac{\mu_1}{k}.
\]






\end{proof}

\section{$k$-averages: $\phi (x)$ maximized at an invariant set.}

Unlike for $k$-averages associated to $\phi$ maximised at a non-recurrent point, there is no
general result covering all of cases (A), (B) and (C). The GEV scaling for $k$-averages depends on the recurrence properties of the system, as well as on the geometry of 
the level sets. For simplicity we consider the doubling map $T(x)=2x\mod 1$ 
on the interval $[0,1]$, and observable  $\phi(x)=d(x,0)^{-\alpha}$, $\alpha>0$. 
This is clearly maximized at the fixed point $0$ of $T$. For $k\geq 1$, consider the time average 
$$Y_i(x)=\frac{1}{k}\sum_{j=i}^{i+k-1}\phi(T^{j}(x)).$$
We state the following result. 

\begin{theorem}\label{thm.double0_avg}
Consider the map $T(x)=2x\mod 1$, and the observable $\phi(x)=d(x,0)^{-\alpha}$.
Let  $M_n=\max_{j\le n} \{\phi (T^j x)\}$ and  $B_n=\max_{j\le n} \{Y_j\}$ where  $Y_j (x)=\frac{1}{k}\sum_{i=j}^{j+k-1} \phi (T^i x)$.
If $M_n$ has GEV with parameters $\xi_1$, $\sigma_1$ and $\mu_1$ then $B_n$ has GEV with parameters $\mu_2=\frac{c(k)\mu_1}{k}$,
 $\sigma_{2}=\frac{c(k)\sigma_1}{k}$ and $\xi_{2}=\xi_1=\xi$.
Here $c(k)$ is a function of $k$ which satisfies $c_l<c(k)<c_u$ for uniform constants $c_l,c_u>0$.
\end{theorem}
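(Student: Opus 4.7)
The plan is to apply the scaling Lemma~\ref{main}, so I must identify $g(k,T) := w_n/u_n$ and the extremal indices $\theta_1,\theta_2$. Since $\phi(x)=d(x,0)^{-\alpha}$ is maximised at the fixed point $0$ of the doubling map, classical theory gives $\theta_1 = 1 - 1/2 = 1/2$. The new work is the tail asymptotics of $\mu(Y_0>w)$ for large $w$, together with verification of $\DD_q(u_n)$, $\DD'_q(u_n)$ and computation of $\theta_2$ for the averaged process $(Y_i)$.

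First I would analyse $\{Y_0>w\}$ by decomposing over the first close-approach time $j_*(x) := \min\{0\le j \le k-1 : |T^jx|<\epsilon\}$, for a small fixed $\epsilon < 2^{-k}$. Near such a first approach at time $m$ with $t := |T^m x|$ small, the expansion $|T^{m+i}x| = 2^i t$ yields
\[
Y_0(x) = \frac{1}{k}\Bigl[A_m(x) + t^{-\alpha}G_{k-m}\Bigr], \qquad G_\ell := \sum_{i=0}^{\ell-1}2^{-i\alpha},
\]
where $A_m(x)$ is the bounded contribution of the pre-approach iterates. Hence $\{Y_0>w\}\cap\{j_*=m\}$ is, up to lower order, the set $\{|T^m x|<r_m\}$ intersected with the avoidance condition, where $r_m := G_{k-m}^{1/\alpha}(kw)^{-1/\alpha}$. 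Tracking the $2^m$ preimages of $(-r_m,r_m)$ under $T^m$ and eliminating those whose earlier iterates enter $(-\epsilon,\epsilon)$ (for $\epsilon$ small, precisely the odd-indexed dyadic preimages survive) I would obtain $\mu(B_0)\sim 2r_0$ and $\mu(B_m)\sim r_m$ for $1\le m\le k-1$; summing,
\[
\mu(Y_0>w) \sim T_k(kw)^{-1/\alpha}, \qquad T_k := 2G_k^{1/\alpha}+\sum_{\ell=1}^{k-1}G_\ell^{1/\alpha}.
\]
Solving $n\mu(Y_0>w_n)=\tau=n\mu(\phi>u_n)$ then gives $g(k,T) = (T_k/2)^\alpha/k$, and since $G_\ell\in[1,(1-2^{-\alpha})^{-1}]$ for every $\ell$, the factor $T_k$ is sandwiched between positive constants times $k$ depending only on $\alpha$, delivering the uniform bounds $c_l \le c(k) \le c_u$ claimed in the statement (with $c(k) := (T_k/2)^\alpha$).

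The extremal index $\theta_2$ is read off from the cluster structure of $Y$: near $j_*=0$ with $|x|=t$ one has $Y_i(x)\approx 2^{-i\alpha}Y_0(x)$, so conditional on $Y_0>w$ the cluster size is geometric with parameter $1/2$, giving $\theta_2 = 1/2 = \theta_1$ and hence $(\theta_2/\theta_1)^{\xi_1}=1$. Verification of $\DD_q(u_n)$ and $\DD'_q(u_n)$ for the process $Y\circ T^i$ proceeds as in Theorems~2.1 and~2.8 of~\cite{CHN}: exponential decay of correlations for BV test functions handles the mixing condition, while the shrinking, well-separated geometry of the level sets $\{Y>w_n\}$ handles the non-clustering condition. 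Lemma~\ref{main} in the Fr\'echet case then yields $\sigma_2 = g(k,T)\sigma_1 = c(k)\sigma_1/k$ and $\mu_2 = c(k)\mu_1/k$, with $\xi_2 = \xi_1 = \alpha$.

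The main obstacle is Step~3, the careful bookkeeping behind $\mu(B_m)\sim r_m$. The preimage count is clean when $\epsilon$ is chosen much smaller than $2^{-k}$, but one must keep the error terms (coming from the avoidance constraint and from the bounded pre-approach sum $A_m(x)$) uniformly $o(r_m)$ as $w\to\infty$. The $\DD'_q$ verification for $Y$ is also more delicate than for $\phi$ alone, because the level sets $\{Y>w_n\}$ are not neighbourhoods of a single point but of a finite family of dyadic rationals $l/2^m$, $0\le m\le k-1$, all shrinking simultaneously at possibly different rates controlled by $G_{k-m}^{1/\alpha}$; separating them into disjoint balls of comparable sizes and summing the resulting short-return estimates is what pins down both the scaling constant $T_k$ and the uniform bounds on $c(k)$.
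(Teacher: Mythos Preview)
Your tail computation for $\mu(Y_0>w)$ is correct and matches the paper's Lemma~\ref{lemma:Y0-est} exactly (your $T_k$ is the paper's $c_1(k)$). The decomposition over the first close-approach time $j_*$ is the same idea as the paper's decomposition over the neighbourhoods $\mathcal{N}_{i,j}$.

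The genuine gap is in the extremal index. You compute $\theta_2$ by looking only at the component $\{j_*=0\}$, i.e.\ the neighbourhood $\mathcal{N}_{0,0}$ of the fixed point, and there the cluster indeed decays geometrically, giving a ``local'' index $1/2$. But you have just shown that $\{Y_0>w\}$ also contains the components $B_m$ for $1\le m\le k-1$, neighbourhoods of the dyadic preimages of $0$. For $x$ in any such $B_m$ one has $Y_0(Tx)>Y_0(x)$ (the orbit is moving \emph{towards} $0$, so the average is increasing), hence $B_m\cap\{Y_0\circ T\le w\}=\emptyset$ for every $m\ge 1$. Only $\mathcal{N}_{0,0}$ contributes to $A(w)=\{Y_0>w,\,Y_0\circ T\le w\}$, and therefore
\[
\theta_2=\lim_{w\to\infty}\frac{\mu(A(w))}{\mu(Y_0>w)}
=\frac{\tfrac12\,\mu(\mathcal{N}_{0,0})}{T_k\,(kw)^{-1/\alpha}}
=\frac{G_k^{1/\alpha}}{T_k},
\]
which is of order $1/k$ (indeed $\theta_2\sim 1/(2k)$ as $k\to\infty$; see Remark~\ref{rm: index}), not $1/2$.

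This error is not cosmetic: it is exactly what makes $c(k)$ bounded. With your $\theta_2=\theta_1$, Lemma~\ref{main} gives $\sigma_2=g(k,T)\sigma_1$ and hence $c(k)=(T_k/2)^{\alpha}$. But you yourself observe that $T_k$ is sandwiched between constants times $k$, so $(T_k/2)^{\alpha}\asymp k^{\alpha}\to\infty$, contradicting the uniform bounds $c_l<c(k)<c_u$ in the statement. With the correct $\theta_2$, the factor $(\theta_2/\theta_1)^{\xi}\asymp k^{-\xi}$ in Lemma~\ref{main} exactly cancels the $k^{\xi}$ growth in $g(k,T)$, yielding the bounded $c(k)$.
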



\begin{remark}\label{rmk.8.1_1}
This example is clearly within class of systems (A). From the techniques of the proof, it is
possible to extend the conclusion of Theorem \ref{thm.double0_avg} to other examples
within class (A). The main steps are to determine the scaling laws for the GEV constants.
Given this, the verfication of conditions $\DD_q(u_n)$,  $\DD'_q(u_n)$ follows the same
approach as the proof of Theorem~\ref {thm.avg_generic}.
\end{remark}
\begin{remark}\label{rm: index}
Within the proof we will be more explicit about the functions $c(k)$,
and for the extremal index we obtain the asymptotic result
$\theta\sim \frac{1}{2k}$, (for large $k$). 
\end{remark}

\noindent {\bf The case $k=2$.} Before embarking on the proof for general $k$, we illustrate
for $k=2$. Here, we just focus on the calculation of the relevant $w_n$ sequence, and calculation
of the extremal index. In the proof of Theorem \ref{thm.double0_avg} for general $k$, we consider
verification of conditions $\DD_q(u_n)$,  $\DD'_q(u_n)$.

Here, we therefore have 
$$Y_0(x)=\frac{1}{2}\left(d(x,0)^{-\alpha}+d(T(x),0)^{-\alpha}\right).$$
For large $u>0$, consider the set $\{Y_0(x)\geq u\}$. Clearly $\{Y_{0}(x)=+\infty\}=\{0,1/2\}$,
and hence there exists $u_0>0$ such that for all $u\geq u_0$ we have the following.
The set $\{Y_0(x)\geq u\}$ consists of two disjoint neighbourhoods $\mathcal{N}_0(u)$,
$\mathcal{N}_{\frac{1}{2}}(u)$ of $0$, and (resp.) $1/2$. We choose $u_0$ so that
$T^2$ is injective on each neighbourhood. 
Furthermore we suppose that for all $x\in\mathcal{N}_{\frac{1}{2}}(u)$ we
have $d(x,0)>1/4$.

For $u\geq u_0$, we now estimate $\mu\{Y_0>u\}$. Consider $x\in\mathcal{N}_0(u)$
with $d(x,0)=r$. Then
\begin{equation*}
Y_0(x)=\frac{1}{2}( r^{-\alpha}+(2r)^{-\alpha}).
\end{equation*}
By estimating the range of values of $r$ for which $Y_0(x)\geq u$ we obtain,
\begin{equation}
\mu(\mathcal{N}_0(u))=2(2u)^{-\frac{1}{\alpha}}(1+2^{-\alpha})^{\frac{1}{\alpha}}.
\end{equation}
Now let $x\in\mathcal{N}_{\frac{1}{2}}(u)$, and suppose $d(x,1/2)=r$. 
Since $T^2$ is injective on $\mathcal{N}_{\frac{1}{2}}(u)$, we have
$d(T(x),T(1/2))=2d(x,1/2)=2r$. Since $T(1/2)=0$, and using the assumption
$1/4<d(x,0)\leq 1/2$, we obtain the following bounds 
\begin{equation}
\frac{1}{2}(2^{\alpha}+(2r)^{-\alpha})\leq Y_0(x)\leq \frac{1}{2}(4^{\alpha}+(2r)^{-\alpha}).
\end{equation}
Hence
\begin{equation*}
Y_0(x)\geq u\;\implies\; r\leq\frac{1}{2}(2u-4^{\alpha})^{-\frac{1}{\alpha}},
\end{equation*}  
and conversely
\begin{equation*}
r\leq\frac{1}{2}(2u-2^{\alpha})^{-\frac{1}{\alpha}}\;\implies\;Y_0(x)\geq u.
\end{equation*}   
Putting this together leads to
\begin{equation}
\mu(\mathcal{N}_{\frac{1}{2}}(u))\in[(2u-2^{\alpha})^{-\frac{1}{\alpha}},
(2u-4^{\alpha})^{-\frac{1}{\alpha}}].
\end{equation}
Hence, 
\begin{equation}
\mu\{Y_0(x)\geq u\}=(2u)^{-\frac{1}{\alpha}}\left(2(1+2^{-\alpha})^{\frac{1}{\alpha}} 
+(1-c_0(\alpha)(2u)^{-1})^{-\frac{1}{\alpha}}\right),
\end{equation}
with $2^{\alpha}\leq c_0(\alpha)\leq 4^{\alpha}$. Thus for $u\geq u_0$ large, we obtain
the asymptotic relation,
\begin{equation}\label{eq:Y0-est-k2}
\mu\{Y_0(x)\geq u\}\sim (2u)^{-\frac{1}{\alpha}}\left(1+2(1+2^{-\alpha})^{\frac{1}{\alpha}}\right).
\end{equation}
Using the notations of Lemma \ref{main} we can obtain the scaling relations between $u_n(\tau)$ and 
$w_n(\tau)$ via $w_n(\tau)=g(k,T)u_n(\tau)$. Using, 
$\mu\{\phi(x)\geq u\}\sim 2u^{-\frac{1}{\alpha}}$ we obtain $u_n=(2n/\tau)^{\xi}$, where $\xi=\alpha$ is the shape parameter. 
The corresponding $w_n(\tau)$ can be found using equation \eqref{eq:Y0-est-k2}. For
$k=2$, this leads
to
$$g(k,T)=\frac{1}{2}\left(1+2(1+2^{-\alpha})^{\frac{1}{\alpha}}\right)^{\alpha}.$$

To consider the extremal index, it suffices to estimate the measure of the set
$A(u)=\{Y_0(x)\geq u,Y_0(T(x))\leq u\}$. Again we take $u\geq u_0$.
For $u_0\geq \frac{1}{2}(12)^{\alpha}$ we claim that 
$A(u)\cap \mathcal{N}_{\frac{1}{2}}(u)=\emptyset$. To see this, consider 
$x\in \mathcal{N}_{\frac{1}{2}}(u)$ and $d(x,1/2)=r$. Then
$$Y_0(T(x))-Y_0(x)=(4r)^{-\alpha}-d(x,1/2)^{-\alpha}\geq (4r)^{-\alpha}-4^{\alpha}.$$
This follows by assumption that $d(x,0)>1/4$ and that $T^2$ is injective on
$\mathcal{N}_{\frac{1}{2}}(u)$ so that $d(T^2(x),T^2(1/2))=4d(x,1/2)=4r$, with
$T^2(1/2)=0$. By assumption on $u_0$, it follows that $Y_0(T(x))>Y_0(x)\geq u$.
Hence $A(u)=A(u)\cap\mathcal{N}_{0}(u)$.

To find $\mu(A(u))$ we proceed as follows. On $\mathcal{N}_{0}(u)$,
with $d(x,0)=r$ we have 
$$Y_0(T(x))=\frac{1}{2}((2r)^{-\alpha}+(4r)^{-\alpha}),$$
so that $\{Y_0(T(x))\leq u\}$ is the set of points $x$ satisfying
$$d(x,0)\leq \frac{1}{2}(2u)^{-\frac{1}{\alpha}}(1+2^{-\alpha})^{\frac{1}{\alpha}}.$$
Hence, for the extremal index it suffices to study the limit
$$\lim_{n\to\infty}\frac{\mu(A^{(1)}_n(w_n))}{\mu\{Y_0 \geq w_n\}},$$
with $w_n$ specified via the asymptotic relation $n\mu(Y_0>w_n)\to\tau$, (for $\tau>0$).
We obtain,
\begin{equation}
\lim_{n\to\infty}\frac{\mu(A^{(1)}_n(w_n))}{\mu\{Y_0 \geq w_n\}}=\frac{(1+2^{-\alpha})^{\frac{1}{\alpha}}}{1+2(1+2^{-\alpha})^{\frac{1}{\alpha}}}.
\end{equation}
This gives the extremal index, and concludes the example in the case $k=2$.

\begin{proof} We now prove Theorem \ref{thm.double0_avg}.
Following on from the $k=2$ example, we obtain asymptotics for $\mu\{Y_0(x)\geq u\}$,
and $\mu\{Y_0(x)\geq u,Y_0(T(x))\leq u\}$ as $u\to\infty$. The argument
here is for the run-length $k$ arbitrary, but fixed. It is clear that
the set $\{Y_0=+\infty\}$ corresponds to the set $\{x:f^{k-1}(x)=0\}$. We subdivide this set up
into subsets $E_j$ defined as follows:
\begin{equation}
\{Y_0=+\infty\}=\bigcup_{0\leq j< k}\{x:\,f^{\ell}(x)\neq 0,\,\forall\,\ell<j,\,f^j(x)=0\}
:=\bigcup_{0\leq j< k}E_j.
\end{equation}
Thus for $k\geq 4$, we have
$E_0=\{0\}$, $E_1=\{\frac12\}$, $E_2=\{\frac14,\frac34\}$, $E_3=\{\frac18,\frac38,\frac58,
\frac78\}$, etc. The general representation can be expressed as
\begin{equation}
E_j=\left\{\frac{2i+1}{2^j},\,0\leq i\leq 2^{j-1}\right\},\;(1\leq j\leq k-1),
\end{equation}
and we write $x_{i,j}=2^{-j}(2i+1)$. Note that the cardinality of $E_j$ is $2^{j-1}$. To estimate
$\mu\{Y_0>u\}$, we consider the function $Y_0$ on neighbourhoods of $x_{i,j}$. We state
the following lemma.
\begin{lemma}\label{lemma:Y0-est}
For $k\geq 2$, we have the following asymptotic
\begin{equation}\label{eq:Y0-lemma-est}
\mu\{Y_0(x)\geq u\}\sim (ku)^{-\frac{1}{\alpha}}\left\{
2\left(\frac{1-z^{k}}{1-z}\right)^{\frac{1}{\alpha}}+
\sum_{j=1}^{k-1}\left(\frac{1-z^{j}}{1-z}\right)^{\frac{1}{\alpha}},
\right\}
\end{equation}
where $z=2^{-\alpha}$. In particular, the following refinements hold. 
There exists $c_1\equiv c_1(k)>0$ with
\begin{equation}\label{eq:Y0-lemma-est2}
\mu\{Y_0(x)\geq u\}\sim c_1(k)(ku)^{-\frac{1}{\alpha}},\;\textrm{with}\;
1\leq\frac{c_1(k)}{k}\leq\frac{2}{(1-2^{-\alpha})^{-\frac{1}{\alpha}}}. 
\end{equation}
Moreover, as $k\to\infty$ then $c_1(k)/k$ approaches
$(1-2^{-\alpha})^{-\frac{1}{\alpha}}.$ 
\end{lemma}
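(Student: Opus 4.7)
The plan is to localize the analysis to small neighbourhoods of each of the finitely many points $x_{i,j}\in E_j$ at which $Y_0=+\infty$, show that on each such neighbourhood a single geometric sum of terms dominates $Y_0(x)$, and then aggregate the contributions over all of $\bigcup_{0\le j<k}E_j$.

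Fix $j\in\{0,1,\ldots,k-1\}$ and $x_{i,j}\in E_j$, and write $r=d(x,x_{i,j})$. For $r$ small enough that the first $k-1$ iterates avoid the discontinuity of $T$, the local linearity of the doubling map gives $T^\ell(x)=T^\ell(x_{i,j})+2^\ell(x-x_{i,j})\pmod 1$; in particular $d(T^j x,0)=2^j r$, and continued linear expansion yields $d(T^{j+m}x,0)=2^{j+m}r$ for $0\le m\le k-1-j$, provided $2^{k-1}r<1/2$. Meanwhile, for $\ell<j$ the point $T^\ell(x_{i,j})$ is bounded away from $0$ by the very definition of $E_j$, so $\phi(T^\ell x)$ is $O(1)$ on the neighbourhood. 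With $z=2^{-\alpha}$ this yields the pointwise asymptotic
\[
Y_0(x)=\frac{r^{-\alpha}}{k}\,z^j\,\frac{1-z^{k-j}}{1-z}+O(1),\qquad r=d(x,x_{i,j}),
\]
and inverting $Y_0\ge u$ produces a symmetric interval about $x_{i,j}$ whose Lebesgue measure is asymptotically twice $\bigl((ku)^{-1}z^j(1-z^{k-j})/(1-z)\bigr)^{1/\alpha}$. Since $|E_j|=2^{j-1}$ for $j\ge 1$ and $z^{j/\alpha}=2^{-j}$, the counting factor $2^{j-1}\cdot 2\cdot 2^{-j}$ collapses to $1$, so the total contribution from $E_j$ ($j\ge 1$) equals $(ku)^{-1/\alpha}\bigl((1-z^{k-j})/(1-z)\bigr)^{1/\alpha}$. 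The singleton $E_0=\{0\}$ is handled the same way, but now all $k$ terms in $Y_0$ blow up (since $T^\ell(0)=0$ for every $\ell$), giving contribution $2(ku)^{-1/\alpha}\bigl((1-z^k)/(1-z)\bigr)^{1/\alpha}$. Reindexing $j\mapsto k-j$ in the sum over $1\le j\le k-1$ produces \eqref{eq:Y0-lemma-est} exactly. For \eqref{eq:Y0-lemma-est2}, identify
\[
c_1(k)=2\Bigl(\tfrac{1-z^k}{1-z}\Bigr)^{1/\alpha}+\sum_{j=1}^{k-1}\Bigl(\tfrac{1-z^j}{1-z}\Bigr)^{1/\alpha},
\]
and use that $j\mapsto ((1-z^j)/(1-z))^{1/\alpha}$ is increasing from $1$ at $j=1$ to $(1-z)^{-1/\alpha}$ as $j\to\infty$; the two-sided bound on $c_1(k)/k$ follows by bounding each summand, and the limit $c_1(k)/k\to(1-z)^{-1/\alpha}$ is a Ces\`aro-type statement.

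The main technical obstacle is uniformity of the local asymptotic: the $O(1)$ error, the mutual disjointness of the neighbourhoods around distinct elements of $\bigcup_{0\le j<k}E_j$, and the requirement that no iterate of $T$ cross a discontinuity within time $k-1$, must all be controlled at once. This is handled by choosing a threshold $u_0\equiv u_0(k)$ so that the radius $(ku)^{-1/\alpha}$ is strictly less than both $2^{-k}$ and half the minimum separation of points in $\bigcup_{0\le j<k}E_j$; for $u\ge u_0$ all three constraints hold simultaneously and the $O(1)$ contributions from $\sum_{\ell<j}\phi(T^\ell x)$ are absorbed into a $o((ku)^{-1/\alpha})$ correction, leaving the stated asymptotic.
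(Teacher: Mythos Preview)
Your proposal is correct and follows essentially the same approach as the paper: localize to neighbourhoods of the finitely many points $x_{i,j}$ where $Y_0=+\infty$, use the local linear expansion of $T$ to reduce $Y_0(x)$ to a geometric sum in $z=2^{-\alpha}$ plus a bounded remainder, invert to find the measure of each neighbourhood, and then sum over all $(i,j)$ using $|E_j|=2^{j-1}$ so that the factor $2^{j-1}\cdot 2\cdot 2^{-j}$ collapses. The paper packages the uniformity you describe at the end into three explicit properties (disjointness of the $\mathcal{N}_{i,j}(u)$, injectivity of $T^k$ on each, and a lower bound $d(T^\ell x,0)>2^{-2k}$ for $\ell<j$) valid for $u\ge u_0$, and carries the bounded remainder through as explicit two-sided implications rather than an $O(1)$, but the substance is the same; your reindexing $j\mapsto k-j$ and the Ces\`aro argument for $c_1(k)/k$ are exactly what the paper does.
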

\begin{proof}
We prove Lemma \ref{lemma:Y0-est} as follows. Following the methods for the case $k=2$, 
we see that there exists $u_0>0$, such that for all $u\geq u_0$ the following properties hold.
\begin{itemize}
	\item[(i)] The set $\{Y_0>u\}$ consists of a union of $2^{k-1}$ disjoint neighbourhoods 
	$\mathcal{N}_{i,j}(u)$ with
	$x_{i,j}\in\mathcal{N}_{i,j}(u)$.
	\item[(ii)] The map $T^k:\mathcal{N}_{i,j}(u)\to S^1$ is injective. Hence,
	$|T^{\ell}(\mathcal{N}_{i,j}(u))|=2^{\ell}|\mathcal{N}_{i,j}(u)|$ for all $\ell\leq k$.
	\item[(iii)] For all $x\in\mathcal{N}_{i,j}(u)$ we have that $d(T^{\ell}(x),0)>2^{-2k}$
	for all $\ell<j$.
\end{itemize}
Thus from (iii), if $x\in\mathcal{N}_{i,j}(u)$ we have that $2^{-2k}\leq d(T^{\ell}(x),0)\leq 1/2$
for all $\ell<j$. The choice $2^{-2k}$ is somewhat arbitrary, and any constant less that $2^{-k+1}$
would suffice. Furthermore if $d(x,x_{i,j})=r$, then
combining with (ii) we see that $d(T^{\ell}(x),0)=2^{\ell}d(x,x_{i,j})=2^{\ell}r$ for all 
$\ell\geq j$. This follows from $T^{\ell}(x_{i,j})=0$ for all $\ell\geq j$.

For $j\geq 1$, let us consider $x\in\mathcal{N}_{i,j}$, and estimate $\mu(\mathcal{N}_{i,j}(u))$ for $u\geq u_0$. Suppose that $d(x,x_{i,j})=r$. We obtain bounds on $r$ in terms of $u$ by considering $\{x:Y_0(x)\geq u\}$.
We have
\begin{equation}
\begin{split}
Y_0(x) &=\frac{1}{k}\left(\sum_{\ell=0}^{j-1}d(T^{\ell}(x),0)^{-\alpha}
+\sum_{\ell=j}^{k-1}d(T^{\ell}(x),0)^{-\alpha}\right),\\
&=\frac{1}{k}\left(\sum_{\ell=0}^{j-1}d(T^{\ell}(x),0)^{-\alpha}
+\sum_{\ell=j}^{k-1} (2^{\ell}r)^{-\alpha}\right).
\end{split}
\end{equation}
From (iii), we have $2^{-2k}\leq d(T^{\ell}(x),0)\leq 1/2$. Writing $z=2^{-\alpha}$, we have the following implications,
\begin{equation*}
Y_0(x)\geq u\;\implies\; r\leq (ku-j z^{-2k})^{-\frac{1}{\alpha}}
\left(\sum_{\ell=j}^{k-1}z^{\ell}\right)^{\frac{1}{\alpha}},
\end{equation*}  
and conversely
\begin{equation*}
r\leq (2u-jz^{-1}) ^{-\frac{1}{\alpha}}\left(\sum_{\ell=j}^{k-1}z^{\ell}
\right)^{\frac{1}{\alpha}}\;\implies\;Y_0(x)\geq u.
\end{equation*}   
In the two implications given above, both bounds on $r$ are asymptotically comparable.
This gives
\begin{equation}
\mu(\mathcal{N}_{i,j}(u))= 2(ku)^{-\frac{1}{\alpha}}\left(1-\frac{h(u,\alpha)j}{ku}
\right)^{-\frac{1}{\alpha}} 2^{-j}\left(\frac{1-z^{k-j}}{1-z}\right)^{\frac{1}{\alpha}}.
\end{equation}
The function $h(\alpha,u)$ satisfies uniform bounds $2^{\alpha} \leq h(u,\alpha)\leq 2^{2\alpha k}$.
Notice that the estimate $\mu(\mathcal{N}_{i,j}(u))$ does not depend on $i$. The dependence on $i$ is removed via item (iii).
Softening of (iii) to include dependence on $i$ would not lead to improvement in the asymptotic
estimates,
except through bounds on the function $h(\alpha,u)$. Now for each $j\geq 1$, there
are $2^{j-1}$ such $\mathcal{N}_{i,j}$. Hence, taking union over all $i$ (for given $j$), 
and by using pairwise disjointness of these neighbourhoods,
we obtain
\begin{equation}\label{eq:est-Y0-1}
\sum_{i=0}^{2^{j-1}-1}
\mu(\mathcal{N}_{i,j}(u))\sim (ku)^{-\frac{1}{\alpha}}\left(\frac{1-z^{k-j}}{1-z}
\right)^{\frac{1}{\alpha}}.
\end{equation}
This equation gives the estimate for $\mu(\{Y_0>u\}\cap(\cup_{i}\mathcal{N}_{i,j}(u)))$.

For case $j=0$, we consider the neighbourhood $\mathcal{N}_{0,0}(u)$ of $x_{0,0}=0$. This estimate
is straightforward, and we obtain the precise result
\begin{equation}\label{eq:est-Y0-2}
\mu(\mathcal{N}_{0,0}(u))=2(ku)^{-\frac{1}{\alpha}}\left(\frac{1-z^{k}}{1-z}
\right)^{\frac{1}{\alpha}}.
\end{equation} 
Using equations \eqref{eq:est-Y0-1}, \eqref{eq:est-Y0-2} we obtain
equation \eqref{eq:Y0-lemma-est} in Lemma \ref{lemma:Y0-est}.
To complete the proof of the lemma, we just estimate the geometric series terms
in equation \eqref{eq:Y0-lemma-est} to quantify the constant $c_1(k)$. 
This analysis is elementary. Indeed,
Since $z=2^{-\alpha}$, we have
$$1\leq \left(\frac{1-z^{k-j}}{1-z}\right)^{\frac{1}{\alpha}}
\leq \frac{1}{(1-z)^{\frac{1}{\alpha}}}=\frac{2}{(2^{\alpha}-1)^{\frac{1}{\alpha}}}.$$
Thus each term within the sum of equation \eqref{eq:est-Y0-1} is bounded above/below accordingly.

To complete the proof the lemma, consider now the asymptotic behaviour as $k\to\infty$, 
so we can refine the estimate on $c_1(k)$.
The following estimates are valid for $|z|<1$, but we have in mind the particular value 
$z=2^{-\alpha}$. We have
$$(1-z^{j})^{\frac{1}{\alpha}}=1-z^{j}b(z),$$
where $b(z)$ uniformly bounded above. Thus the following estimates hold
\begin{equation}
\begin{split}
\sum_{j=1}^{k-1}\left(\frac{1-z^{j}}{1-z}\right)^{\frac{1}{\alpha}} &=\frac{1}{(1-z)^{\frac{1}{\alpha}}}
\sum_{j=1}^{k-1}(1-b(z)z^j)\\
&=\frac{k-1}{(1-z)^{\frac{1}{\alpha}}}+O\left(\frac{z(1-z^{k-1})}{1-z}\right).
\end{split}
\end{equation}
Thus the constant $c_1(k)$ in equation \eqref{eq:Y0-lemma-est2} can be replaced by 
$(1-z)^{-\frac{1}{\alpha}}k+c_2$,
with $|c_2|$ uniformly bounded and independent of $k$. This completes the proof of the Lemma.
\end{proof}


We now consider the extremal index, and it suffices to estimate $\mu\{Y_0(x)>u,Y_0(Tx)<u\}$
for large thresholds $u>0$.
We state the following lemma.
\begin{lemma}
Let $A(u)=\{Y\geq u,Y\circ T<u\}$. Then we have the following estimate
\begin{equation}\label{eq:ei-double-lemma}
\lim_{u\to\infty}\frac{\mu\{A(u)\}}{\mu\{Y_0>u\}}=\frac{1}{2c_1(k)}\left(\frac{1-z^k}{1-z}\right)^{\frac{1}{\alpha}},
\end{equation}
where $c_1(k)$ is the constant in Lemma \ref{lemma:Y0-est}.
\end{lemma}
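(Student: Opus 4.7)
The plan is to use the decomposition of $\{Y_0>u\}$ from Lemma~\ref{lemma:Y0-est} into the disjoint neighbourhoods $\mathcal{N}_{i,j}(u)$ of $x_{i,j}$, and to argue that only the single neighbourhood $\mathcal{N}_{0,0}(u)$ of $0$ contributes to $A(u)$ asymptotically, reducing $\mu(A(u))$ to a radial calculation on one small interval around $0$.

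First I would show that $A(u)\cap\mathcal{N}_{i,j}(u)=\emptyset$ for every $j\geq 1$ once $u$ is large enough. Re-using properties (i)--(iii) from the proof of Lemma~\ref{lemma:Y0-est}, for $x\in\mathcal{N}_{i,j}(u)$ with $d(x,x_{i,j})=r$ one has $d(T^{\ell}x,0)=2^{\ell}r$ for $\ell\geq j$ and $d(T^{\ell}x,0)\geq 2^{-2k}$ for $\ell<j$. With $z=2^{-\alpha}$, the resulting asymptotics
\[
Y_0(x)\sim\frac{r^{-\alpha}z^{j}}{k}\cdot\frac{1-z^{k-j}}{1-z},\qquad Y_0(Tx)\sim\frac{r^{-\alpha}z^{j}}{k}\cdot\frac{1-z^{k-j+1}}{1-z},
\]
yield $Y_0(Tx)/Y_0(x)\to(1-z^{k-j+1})/(1-z^{k-j})>1$ as $r\to 0$, so $Y_0\circ T>Y_0\geq u$ throughout $\mathcal{N}_{i,j}(u)$ and these neighbourhoods contribute nothing to $A(u)$ in the limit.

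Next I would compute $\mu(A(u)\cap\mathcal{N}_{0,0}(u))$ directly. For $d(x,0)=r$ small enough that $T^{k}$ is injective on a two-sided neighbourhood of $0$, one has $d(T^{\ell}x,0)=2^{\ell}r$ for all $0\leq\ell\leq k$, so the exact identity $Y_0(Tx)=zY_0(x)$ holds. The event $\{Y_0\geq u,\,Y_0\circ T<u\}$ becomes the annular condition $z^{1/\alpha}R_0<d(x,0)\leq R_0$, where $R_0=(ku)^{-1/\alpha}\big((1-z^{k})/(1-z)\big)^{1/\alpha}$ is the outer radius of $\mathcal{N}_{0,0}(u)$ already identified in Lemma~\ref{lemma:Y0-est}. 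Its Lebesgue measure on the circle is immediate, and dividing by $\mu\{Y_0>u\}\sim c_1(k)(ku)^{-1/\alpha}$ from Lemma~\ref{lemma:Y0-est} produces \eqref{eq:ei-double-lemma}.

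The main technical subtlety is upgrading the pointwise expansions of $Y_0$ and $Y_0\circ T$ centered at $x_{i,j}$ to estimates that are uniform over the whole neighbourhood $\mathcal{N}_{i,j}(u)$. This is handled by property (iii), which bounds the non-dominant $\ell<j$ terms in both functionals by $k$-dependent constants and renders them negligible against the $r^{-\alpha}$-scale leading terms as $u\to\infty$; once this is in place, the remainder of the argument is elementary geometry on the circle.
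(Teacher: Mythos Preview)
Your proposal is correct and follows essentially the same route as the paper: decompose $\{Y_0>u\}$ into the neighbourhoods $\mathcal{N}_{i,j}(u)$, show the $j\ge 1$ pieces do not meet $A(u)$ because $Y_0\circ T>Y_0$ there, and then do a radial annulus computation on $\mathcal{N}_{0,0}(u)$. The only cosmetic differences are that the paper bounds the telescoping difference $Y_0(Tx)-Y_0(x)=(2^{k}r)^{-\alpha}-d(x,0)^{-\alpha}$ directly rather than passing through your ratio asymptotics, and it computes the inner radius by solving $Y_0(Tx)=u$ explicitly instead of invoking your clean identity $Y_0(Tx)=zY_0(x)$ near $0$.
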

\begin{proof}
Suppose $u_0$ is as specified in the proof of Lemma \ref{lemma:Y0-est}. 
Consider $x\in\mathcal{N}_{i,j}(u)$ for $u\geq u_0$. For $j\geq 1$, we claim
that $\mathcal{N}_{i,j}(u)\cap\{Y_0(Tx)<u\}=\emptyset$. The claim can be established
by analysing $Y_0(x)$ and $Y_0(Tx)$ for $x\in\mathcal{N}_{i,j}(u)$. Suppose
that $d(x,x_{i,j})=r$. Since $T^k:\mathcal{N}_{i,j}(u)\to S^1$ is injective, we have
\begin{align*}
Y_0(x) &=\frac{1}{k}\left(\sum_{\ell=0}^{j-1}d(T^{\ell}(x),0)^{-\alpha}
+\sum_{\ell=j}^{k-1}(2^{\ell} r)^{-\alpha}\right);\\
Y_0(Tx) &=\frac{1}{k}\left(\sum_{\ell=1}^{j-1}d(T^{\ell}(x),0)^{-\alpha} 
+\sum_{\ell=j}^{k}(2^{\ell}r)^{-\alpha}\right).
\end{align*}
Hence,
\begin{equation}\label{eq:Y0-est-ei}
Y_0(Tx)-Y_0(x)=(2^{k}r)^{-\alpha}-2^{2k\alpha}.
\end{equation}
By adjusting $u_0$ accordingly, the quantity on the right of equation \eqref{eq:Y0-est-ei}
is positive for all $x\in\mathcal{N}_{i,j}(u)$. Indeed, we just require $r<2^{3k}.$
This leads to the claim that $\mathcal{N}_{i,j}(u)\cap\{Y_0(Tx)<u\}=\emptyset$.
For $j=0$, the corresponding intersection is not empty and we estimate the measure 
$\mathcal{N}_{0,0}(u)\cap\{Y_0(Tx)<u\}$. The measure of $\mathcal{N}_{0,0}$ is
given in equation \eqref{eq:est-Y0-2}. Now consider points in this set
with $Y_0(Tx)<u$. Suppose that $d(x,0)=r$, with $Y_0(x)=u$. We estimate $r$ in terms of $u$. 
Since $T^k$ is injective on $\mathcal{N}_{0,0}(u)$ we obtain
\begin{equation}
\frac{1}{k}\cdot r^{-\frac{1}{\alpha}}(z+\ldots+z^k)=ku\;\implies\; 
r=(ku)^{-\frac{1}{\alpha}}z^{\frac{1}{\alpha}}\left(\frac{1-z^{k}}{1-z}\right)^{\frac{1}{\alpha}}.
\end{equation}
Overall, this leads to
\begin{equation}
\mu\{Y_0(x)>u,Y_0(Tx)<u\}=\frac{1}{2}(ku)^{-\frac{1}{\alpha}}\left(\frac{1-z^k}{1-z}
\right)^{\frac{1}{\alpha}}.
\end{equation}
Hence, we obtain
\begin{equation}
\lim_{u\to\infty}\frac{\mu(A(u))}{\mu\{Y_0 \geq u\}}
=\frac{1}{2c_1(k)}\left(\frac{1-z^k}{1-z}
\right)^{\frac{1}{\alpha}}.
\end{equation}
This completes the proof of the lemma.
\end{proof}
We now complete the proof of Theorem \ref{thm.double0_avg}. First we quantify the function
$g(k,T)$ in the relation $w_n(\tau)=g(k,T)u_n(\tau)$. 
Recall that given $\tau>0$, $u_n(\tau)$ is such that 
$n\mu(\phi(x)>u_n(\tau))\to\tau$. Thus $u_n=(2n/\tau)^{\xi}$, where $\xi=\alpha$ is the shape
parameter. Using Lemma \ref{eq:Y0-lemma-est}, we obtain the corresponding $w_n(\tau)$ satisfying
$n\mu(Y_0>w_n(\tau))\to\tau$.  This gives $w_n=\frac{1}{k}(\tau/c_1(k) n)^{-\xi}$. Hence
$$g(k,T)=k^{-1}\left(\frac{2}{c_1(k)}\right)^{-\xi}\approx k^{\xi-1}.$$
For $k\to\infty$, $c_1(k)/k$ approaches $(1-2^{-\alpha})^{-\frac{1}{\alpha}}$, and hence,
$$g(k,T)\sim (2^{\xi}-1)k^{\xi-1}.$$

The extremal index $\theta$ is calculated via 
$$\theta=\lim_{n\to\infty}\frac{\mu(A^{(1)}_n(w_n))}{\mu\{Y_0>w_n\}}.$$
The value of $\theta$ is precisely the quantity in the right-hand side
of equation \eqref{eq:ei-double-lemma}, and we have
\begin{equation}
\theta=\frac{1}{2c_1(k)}\left(\frac{1-2^{-\alpha k}}{1-2^{-\alpha}}
\right)^{\frac{1}{\alpha}}.
\end{equation}
For $k\to\infty$, the value of $\theta$ approaches $1/2k$. Thus, 
from the notations of Lemma \ref{main} we obtain
$\theta_2/\theta_1\to \frac{1}{k}$. That is, when we compare
the extremal index for the process $B_n$ relative to the extremal index for
the process $M_n$.

To conclude the proof, it suffices to verify conditions
$\DD_q(u_n)$,  $\DD'_q(u_n)$. This follows a standard approach, see \cite{FFT3}. 
Indeed the level sets of $\{Y_0>u\}$ are a finite union of intervals which shrink to a finite union of points as $Y_0(x)$ approaches its maximum. Hence indicator functions of these sets
are of bounded variation type (as necessary to allow verification of these two conditions).
This completes the proof.

\end{proof}

\section{Numerical applications}
\subsection{Introduction to numerics for chaotic dynamical systems.}

In this section common numerical techniques are used to estimate the extremal index and the generalized extreme value parameters for the extremal functionals. We provide a brief introduction on numerical simulation for chaotic dynamical systems; however, for the interested reader, we refer to \cite[Chapter 9]{Extremes_Book} for a nice summary of proven numerical techniques that produce reliable statistical properties in this setting.

Numerical simulations to support Theorem~\ref{theorem_invariant} and Theorem~\ref{thm.avg_generic} are explored in the following sections. In general, every simulation utilizes the following procedure: 
\begin{itemize}
\item[1.] Simulate the trajectories, under iterations of a given dynamical map, beginning with a set of random initial conditions chosen uniformly over $X$.
\item[2.] Take an observable function $\phi(x):X\rightarrow \mathbb{R}$ on each trajectory as a measurable observation in $\mathbb{R}$.
\item[3.] Block the observable trajectories from 2. and take the maximum over each block to calculate the sequence $(M_n)$.
\item[4.] Compute the moving $k$ windowed exceedance or $k$ windowed time-average over each trajectory from 2.
\item[5.] Block the trajectories from 4. with the same block length as 3. to calculate the sequence $(B_n)$.
\item[6.] Perform maximum likelihood estimation of the generalized extreme value distribution parameters $\mu_1$, $\sigma_1$, and $\xi_1$, for the sequence $(M_n)$ and $\mu_2(k)$, $\sigma_2(k)$, and $\xi_2(k)$ for the sequence $(B_n)$.
\item[7.] Repeat 4--7 for each window size $k$.
\end{itemize}

Trapping of a trajectory near the a fixed point occurs in numerical simulation of piecewise $C^2$ uniformly example maps, introduced in 3.2.1 (A), due to finite precision. Adding a small $\varepsilon = O(10^{-8})$ perturbation to each step in the orbit allows our trajectory to continue to evolve under longer iterations of the interval map without affecting the statistical properties of extremes of the trajectory. Detailed studies on the effects of small additive error for these systems in the context of extreme value theory are described in \cite[7.5 and 9.7]{Extremes_Book} and \cite{FMT}, and employed in many numerical investigations of extremes in dynamical systems such as \cite{FV, FMT}, for interval expanding maps, and \cite{CHN, sandro_coupled} for coupled systems of uniformly expanding maps.

\subsection{Numerical applications in the Fr\'{e}chet case.}
\subsubsection{ Numerical applications of Theorem \ref{theorem_invariant} (a): $k$-exceedances, maximized at an invariant set} \emph{(A) Uniformly expanding maps of the interval: $k$-exceedances maximized at repelling fixed point.}
We use the doubling map 
\[
T(x)=2x\mod1
\]
to simulate 500 different trajectories of length $10^4$ beginning with 500 randomly chosen (from a uniform distribution) points and calculate the trajectory of the observable $\phi(x) = d(x,x_0)^{-1}$ where $x_0 = 0$ is the repelling fixed point. Note that with this choice of observable we can expect the sequence $(M_n)$ to follow a Fr\'{e}chet distribution, $\xi_1=1$, from \cite{HNT}. We estimate the generalized extreme value parameters using the procedure described in Section 9.1 with the $k$-exceedance functional. From Theorem~\ref{theorem_invariant}, noting that $\lambda = 2$, we expect that
\[ 
\mu_2(k) = 2^{-(k-1)\cdot 1}\mu_1, \quad \sigma_{2}(k) = 2^{-(k-1)\cdot 1} \sigma_1, \quad \xi_2(k) = \xi_1 = 1
\]
We refer to figure \ref{fig:invfrechet} for an illustration of the numerical agreement we obtain for Theorem \ref{theorem_invariant} (a). We report results for $k = 1$ to $10$, beyond this maximum likelihood estimation of the shape parameter becomes unreliable to compute. This makes computational sense since the $k$-exceedance functional limits right-tail sampling, where sampling is vital to estimate the tail decay, by way of taking the minima over an increasing window length. 

In terms of Lemma~\ref{main}, our results indicate that $g(k,T) = 2^{-(k-1)\cdot 1}$ by noting that $\theta_2 = \theta_1$ holds for any $k$. Finally, although we illustrate our results for shape $\xi_1 = \alpha = 1$, we remark that our numerical results, without alterations to the number of initial conditions or length of trajectories, also show agreement with Theorem \ref{theorem_invariant} (a) for $1\ge\alpha>0$ with $g(k,T) = 2^{-(k-1)\alpha}$.

\begin{figure}
\begin{minipage}{0.7\textwidth}
\includegraphics[width=\textwidth]{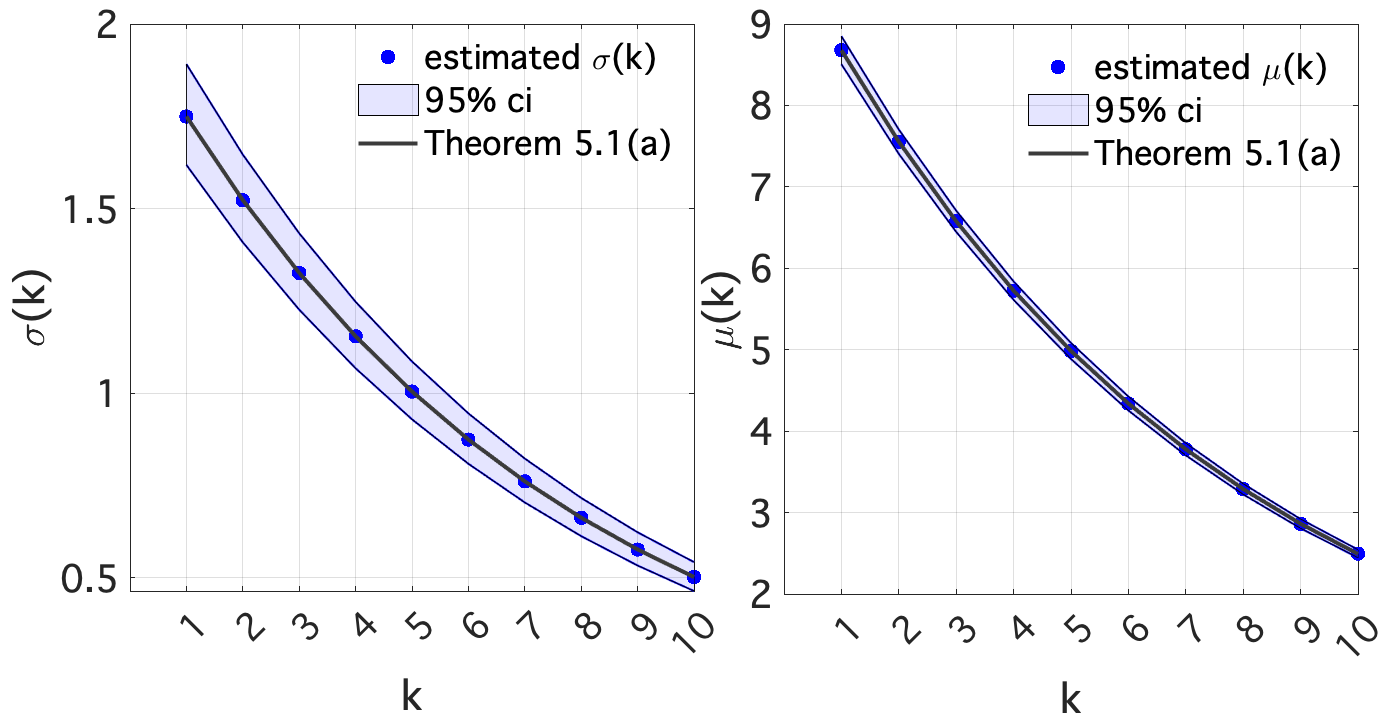}
\caption*{\qquad (a) \qquad \qquad \qquad \qquad \qquad \qquad \qquad (b)}
\end{minipage}
\begin{minipage}{0.7\textwidth}
\includegraphics[width=\textwidth]{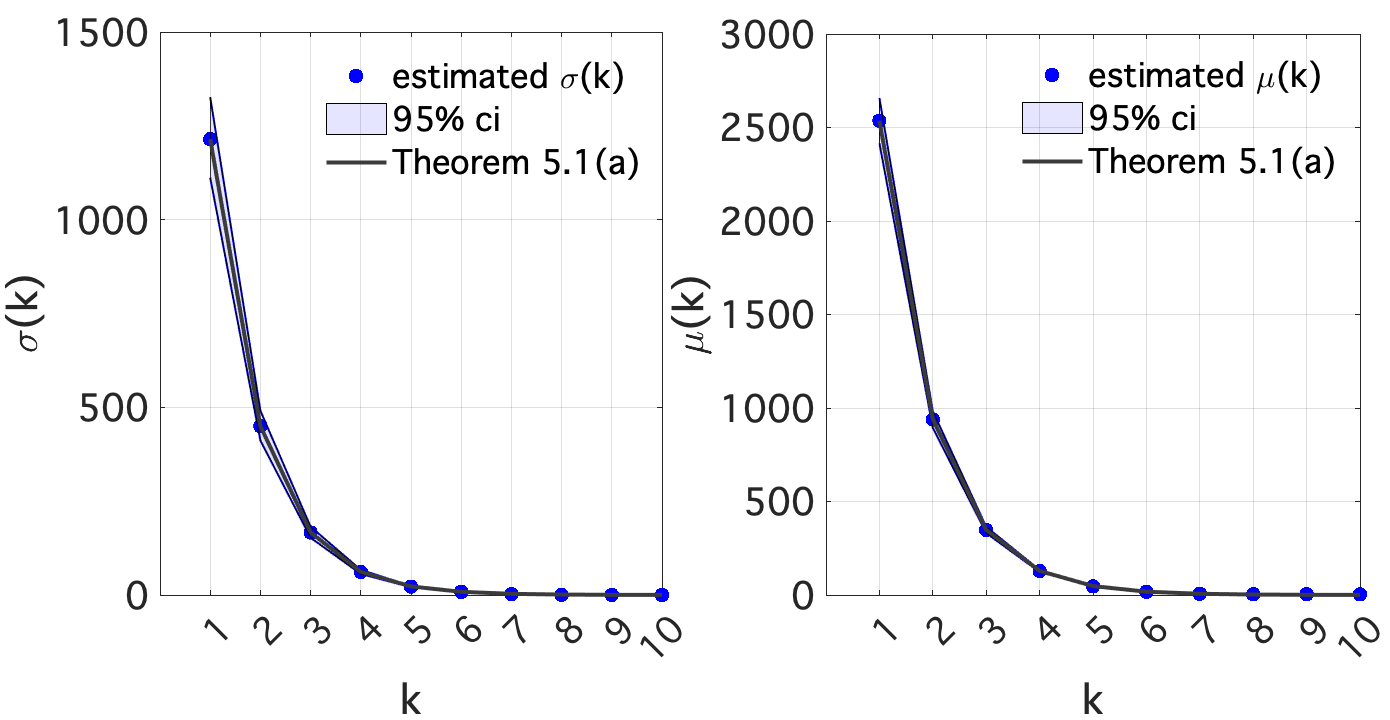}
\caption*{\qquad (c) \qquad \qquad \qquad \qquad \qquad \qquad \qquad (d)}
\end{minipage}
\caption{Numerical agreement with Theorem \ref{theorem_invariant} (a,  b) for the doubling map and (c, d) the coupled system of expanding maps with the $k$-exceedance functional, Fr\'{e}chet observable, and $x_0=0$ as the invariant fixed point.}\label{fig:invfrechet}
\end{figure}

\emph{(C) Coupled systems of uniformly expanding maps: $k$-exceedances maximized at an invariant set.}

We consider the 3-coupled system $F: [0,1]^M \to [0,1]^M$, described in Equation (3.1) of Section 3.2.3, of expanding maps of form $T_{\beta}(x) = \beta x \mod{1}$ for $\beta=3$. In this setting, the uniform expanding factor will be $\lambda=3(1-\gamma)$.

For $x = (x_1,x_2,x_3)$, we define $\|(x_1,x_2, x_3)\|= \sqrt{x_1^2 + x_2^2 +x_3^2} = \|x\|_2$. We take $\phi(x) = d(x,L)^{-1}$ as our observable of this system, where $L$ is the repelling invariant hyperplane of synchrony 
\[
L = \{(x_1, x_2,x_3):  x_1 = x_2 = x_3\}.
\]
Note that the component of a point $x= (x_1,x_2,x_3)$ orthogonal to $L$ is $x^{\perp}=(x_1 - \bar{x},x_2 - \bar{x}, x_3 - \bar{x})$ where $\bar{x}=\frac{1}{3}\sum^3_{i=1}x_i$, so our observable reduces to $\phi(x) = (\|x^{\perp}\|_2)^{-1}$ for $\alpha = 1$. From Theorem~\ref{theorem_invariant}, we expect that,
\[ 
\mu_2(k) = (3(1-\gamma))^{-(k-1)\cdot 1}\mu_1, \quad \sigma_{2}(k) = (3(1-\gamma))^{-(k-1)\cdot 1} \sigma_1, \quad \xi_2(k) = \xi_1 = 1.
\]

We refer to figure \ref{fig:invfrechet} for an illustration of the numerical results for this case which agree well with Theorem \ref{theorem_invariant} (a).

\subsubsection{ Numerical inference using Theorem \ref{theorem_invariant} (a): $k$-exceedances, maximized at an invariant set} 

To fit an extremal distribution to data, one often assumes the form of the underlying distribution and fits a   Generalized Extreme Value (GEV) for block maxima or a Generalized Pareto for over threshold excesses. From this one estimates the parameters (location, scale, and shape), often by some form of likelihood or moments estimation.
Arguably, the most important parameter in extremal fitting is the shape parameter, as it describes the behavior in the tails of the distributions which correspond to the most impactful extreme events.

It is well-known, with research beginning in 1928 with Fisher and Tippett, that estimation of the true shape parameter is the most numerically difficult because of the length of time it takes the extremal distribution to converge in the tails.
Advancements in shape parameter estimation, stemming from the work of Leadbetter (1983) and Smith (1982, 1987), have made reasonable estimations of limiting extremal distributions possible, especially for the bounded, Weibull case and the heavy-tailed, Fréchet case. As an example, the optimal rate of convergence to the asymptotic Fr\'{e}chet extremal distribution was found by Smith (1987) to be $O(n^{\beta})$ for some $\beta$ which depends on the scaling sequence $(a_n)$ coming from normalizing thresholds $u_n = y/a_n+b_n$. As previously mentioned, for any fixed $N$, the standard deviation $\sigma$ plays the role of $(a_N)$ by describing the normalization factor for the maxima $(M_N)$ required based on the rate that $(M_n)\rightarrow\infty$ as $n\rightarrow\infty$. As a result, the theorems and lemmas for the $k$-exceedance and $k$-average functional observables which indicate a reduced $\sigma$ for increasing $k$, illustrate that the value of $\beta$, and hence the convergence rate, is reduced as well.

We now illustrate how to apply Theorem \ref{theorem_invariant} (a), checked for accuracy in the previous section, to obtain accurate estimates of the extreme value distribution for the $k$-exceedance functional and show that these estimates are, for increasing $k$ values, more accurate than maximum likelihood estimation (MLE).
We first explain how we numerically perform fits for Fr\'echet random variables from the doubling map, using the relationships derived, and illustrate that the fits we obtain using our method are as good as the MLE for smaller window sizes $k$ and outperform the MLE for increasing window sizes.
The benefit of working with a simple numerical model initially, is that we can simulate much more data to obtain more reliable empirical estimates of return probabilities.

To apply the results of Theorem \ref{theorem_invariant}, we rely on the Ferro-Segers estimate of the extremal index to estimate the expansion parameter $\lambda$. 
Of course, in theory we know the value of $\lambda$ apriori; however, this value will need to be estimated in the data-based case. 
We estimate the true probabilities and return times of extremes for increasing window size $k$ through empirical estimates from a Monte Carlo method, using a very long simulation of $5\times10^6$ iterations, and take the maxima over $5,000$ blocks of length $10^3$.
We then compare the performance of maximum likelihood estimation and our method, using $500$ blocks of length $10^3$, against this long-run empirical estimate. 
We find that return time plots for window size $k = 1$ using our method is, by definition, the same as the maximum likelihood estimation.
For increasing window sizes $k\ge5$ we see an increasing improvement in return time estimates from our method compared with fits using the MLE, see figure \ref{fig:doubling}(a,b) for illustrations corresponding to window size $k = 5$ and $k = 10$. 
For $k\ge 10$, we observe that the MLE completely fails to estimate a distributional fit, likely due to poor sampling in the tail for increasing $k$ window sizes.

\begin{figure}
\begin{minipage}{0.45\textwidth}
\includegraphics[width=\textwidth]{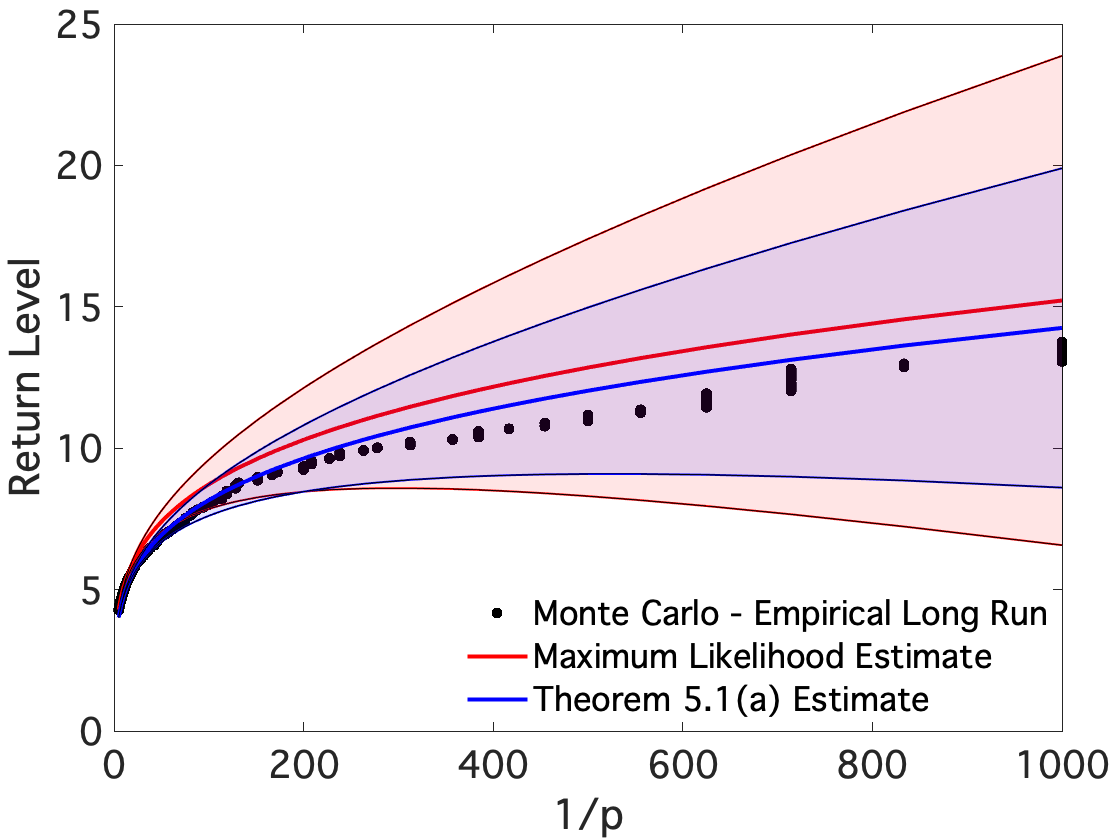}
\caption*{(a)}
\end{minipage}
\begin{minipage}{0.45\textwidth}
\includegraphics[width=\textwidth]{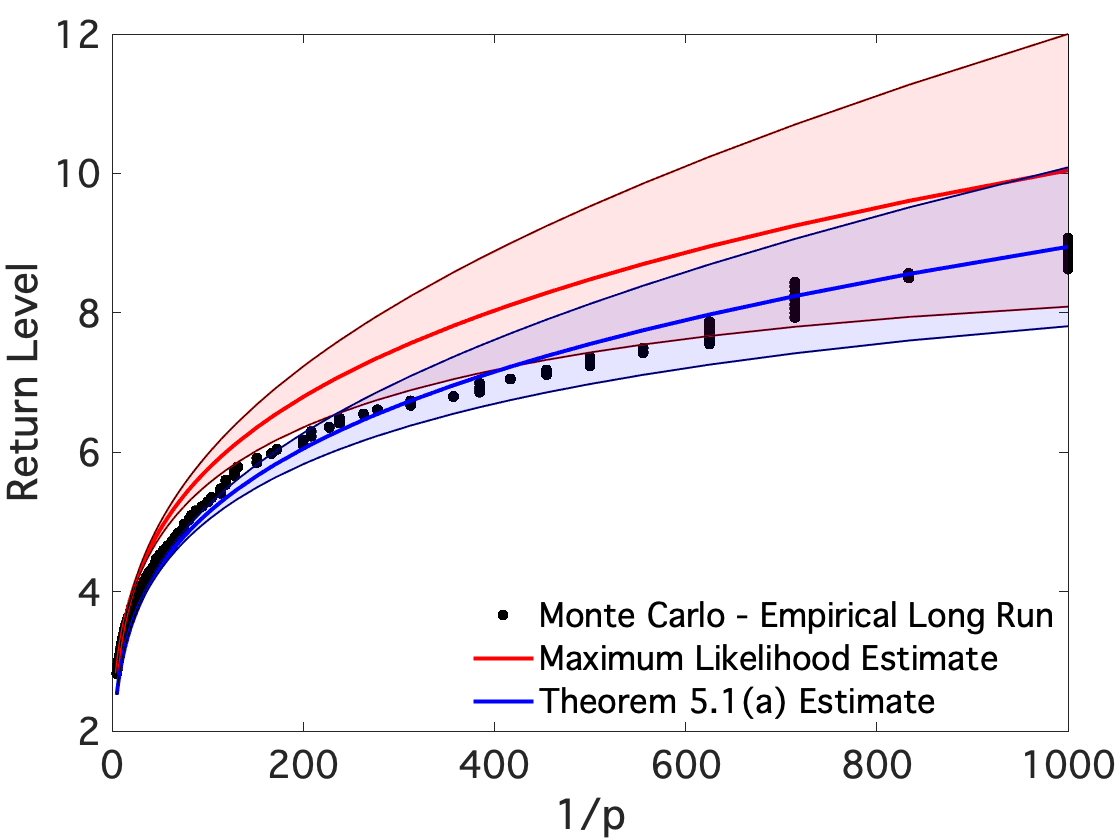}
\caption*{(b)}
\end{minipage}
\raggedright
\begin{minipage}{\textwidth}
\includegraphics[width=0.45\textwidth]{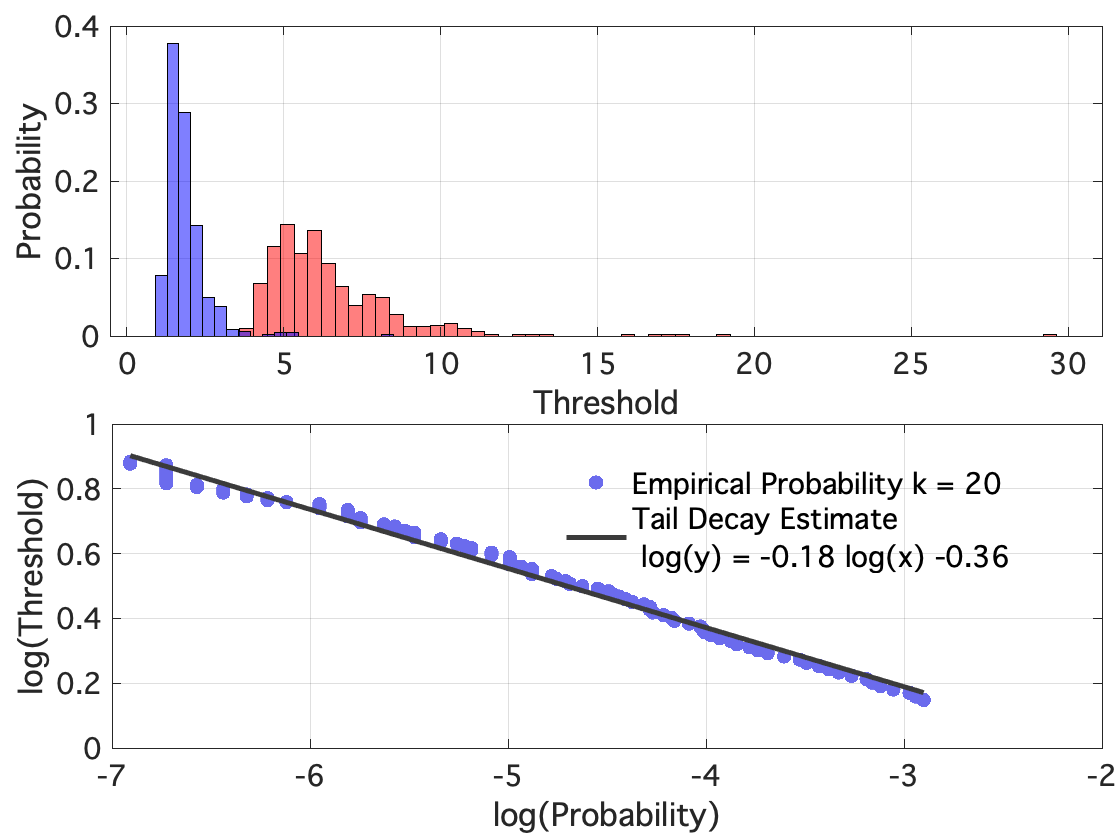}
\caption*{\raggedright \qquad \qquad \qquad (c)}
\end{minipage}
\caption{Doubling Map with $\phi(x) = d(x,0)^{-0.2}$: Return level plots for the block maxima of the $k$-exceedance functional over time windows (a) $k=5$ and (b) $k=8$ applying the results of Theorem \ref{theorem_invariant} (a) verses the maximum likelihood estimated parameters and (c) tail sampling plot and empirical tail probability plot showing tail decay estimate $\propto x^{-0.18}$ corresponding to $k \approx 0.18$.}
\label{fig:doubling}
\end{figure}

Instability of the shape parameter from maximum likelihood estimation as tail sampling becomes rarer contributes to the poor performance that we observe for window sizes $k>10$. 
Sampling issues are illustrated by the distribution in figure \ref{fig:doubling}(c) reflecting the low tail sampling which causes data-based estimates, like the MLE, to become unreliable.
On the other hand, the tail decay remains as expected for the data, even for window sizes as large as $k=20$.
One can check this by fitting a power law to the empirical tail probabilities at high thresholds as seen in figure \ref{fig:doubling}(c).
As a result, we can be confident that the shape parameter we estimated in the original time-series provides a good representation of the windowed $k$-exceedance for window size $k\le 20$.
To numerically check higher values of $k$, we would need to simulate more data for proper estimates of the tail since for $k>20$ we find the tail no longer has the expected power law decay.

\subsubsection{Numerical applications of Theorem \ref{thm.avg_generic}: $k$-average maximized at a non-recurrent point}

\emph{(A) Uniformly expanding maps of the interval: $k$-average maximized at a non-recurrent point.}

We now consider the average functional where $\phi(x) = d(x,x_0)^{-1}$ is maximized at $x_0 = 1/\pi$, a non-recurrent point for the doubling map $T(x) = 2x\mod 1$. Before we state the results, we remark that the choice of $\alpha = 1$ holds numerical significance for this case. The authors in \cite{FV} found that for Gumbel observables defined by $\psi(x) = -\log(d(x,x_0))$, estimates of $b_n$ (for some fixed $N$, this plays the role of the parameter $\mu$), are $x_0$ density-inverse affected by noise, \cite[Prop 7.5.1]{Extremes_Book}. In other words, if $x_0$ is highly recurrent, and therefore has a reasonably large local density\footnote{"local density" here refers to the measure of an $\varepsilon$ ball about $x_0$ on the map $T$, where $\varepsilon$ is on the order of the noise.} then the noise in the system has a lower effect than it does for those points $x_0$ that are non-recurrent (or sporadically recurrent). Choosing a shape parameter $\alpha = 1$ provides enough local density around the point $x_0 = 1/\pi$ to ensure that our estimates for $b_n$ align with the theoretical estimates\footnote{$b_n$ is taken as in equation (9.1.8)}. An alternative to this is to simulate large amounts of data; however, this comes at an obvious additional numerical cost. Interestingly, estimates of $a_n$, and hence estimates of the scale parameter $\sigma$ for some fixed $N$, are unaffected by choice of $x_0$ and will hold for any $1\ge\alpha>0$ readily. Indeed, we see this in our investigation as well. We now state our results.

From Theorem~\ref{thm.avg_generic}, we expect that
\[ 
\mu_2(k) = \frac{\mu_1}{k}, \quad \sigma_{2}(k) = \frac{\sigma_1}{k}, \quad \xi_2(k) = \xi_1 = 1.
\]
We refer to figure \ref{fig:nonfrechet} for an illustration of the numerical results for this case which agree well with Theorem \ref{thm.avg_generic}. In terms of Lemma~\ref{main}, we remark that $\theta_2(k) =\frac{\theta_1}{k}$ as illustrated in figure \ref{fig:nonfrechet}(c) and therefore $g(k,T) = k^{\xi-1}$, as expected.

\begin{figure}
\includegraphics[width=\textwidth]{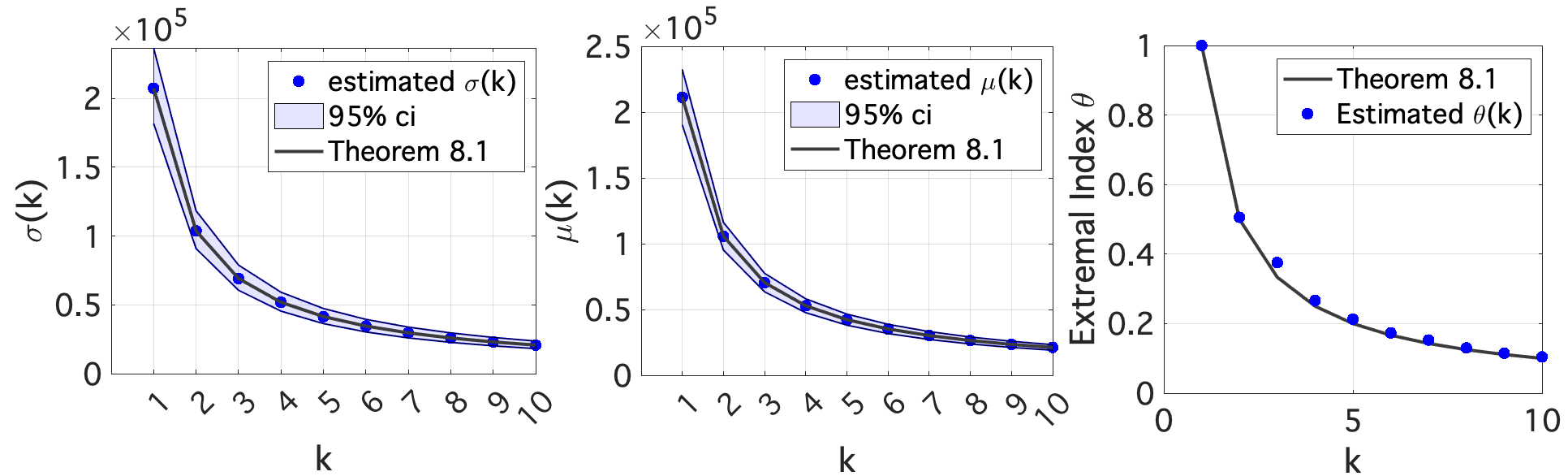}
\caption*{\qquad (a)\qquad \qquad \qquad \qquad \qquad \qquad (b)\qquad \qquad \qquad \qquad \qquad \qquad (c)}
\caption{Numerical agreement with Theorem \ref{thm.avg_generic} for the doubling map and with the $k$-average functional, Fr\'{e}chet observable, and $x_0=1/\pi$ as the non-recurrent point.}
\label{fig:nonfrechet}
\end{figure}

\subsection{Real-world rainfall data (Germany): Climate applications in the Fr\'{e}chet case.}
In this section, we explore the application of Theorem \ref{theorem_invariant} (a) of the $k$-exceedance functional and Lemma \ref{main} (a) of the $k$-average functional for observables with an extremal Fr\'{e}chet distribution. Interestingly, we have found that although real-world rainfall extremes almost always follow a Fr\'{e}chet distribution, general circulation models, like PlaSim give Weibull, or more rarely, Gumbel distributions of rainfall extremes. As a result, our investigation begins with an application of real-world rainfall data taken from the Deutscher Wetterdienst for stations throughout Germany~\cite{dwd}. 

Rainfall from Germany provides the ideal data as it is \textit{mostly} stationary, in the sense that standard cycles have little effect on the total daily rainfall (on the order of $\pm 2$ mm per day).
Occurring at midlatitudes, the quantity of daily rainfall is also not impacted by climate-oceanic cycles, like the El Ni\~{n}o Southern Oscillation.
In many cases, indeed as seen here, we expect daily rainfall extremes to follow a Fréchet distribution.
Results on the $k$-exceedance functional of window size $k$, for example, can tell us how often we expect the minimum daily rainfall will be above some high threshold for all of the $k$ consecutive days.
Quantity of available data on daily rainfall is limited by technology, geography, and time, among other things. 
With the inclusion of a windowed $k$-average or $k$-exceedance functional we also reduce the amount of stochasticity in the data, so measurements of extremes in the tails become rarer. 
This reduction in tail sampling affects our estimates of the parameters of the extreme value distribution.
It is an important question to ask whether we can apply the previous scaling results to real-world Fr\'echet data, for example data representing daily rainfall extremes. 

Scaling lemmas and theorems like the ones in this paper can provide apriori expectations on the extremal distributions of these functionals that allow us to estimate the shape parameter more accurately by applying the relation to estimates using the original time series (e.g. $k = 1$) where the rate of convergence is maximized over all $k$. How? Since measurements of extremes in the tails become rarer by design of the windowed $k$-average or $k$-exceedance functional, provided the shape parameter does not change as a function of the window size, it would be optimal to estimate the shape parameter using the extremes of daily rainfall, keep this as the true shape parameter for the windowed average and minimum daily extremes, and then extrapolate the distribution by using the derived relationship between the location and scale parameter and the window size.

First, we show that the expected behavior of the location and scale parameter, with respect to window size, is verified for the rainfall data in both the $k$-exceedance and $k$-average functional.
Then, we illustrate how to numerically perform fits using our results for the $k$-exceedance functional.
(We do not have enough data to prove empirical fits for the $k$-average functional; however, the results would still hold if enough data were available.)
Finally, we show that (1) our results are as good as the Maximum Likelihood Estimate (MLE) for smaller window sizes and (2) outperform the Maximum Likelihood Estimate for increasing window sizes.
This holds true until the tail can no longer be represented by the time series for large window sizes. 

We now check the numerical results shown above for simple, chaotic dynamical systems against the behavior of extremes for daily rainfall data from stations across Germany.
Daily rainfall is maximized in a storm system; hence we expect the time-series to behave like an observable $\phi(x)$ maximized at an invariant set.
Since we can reasonably assume that our daily rainfall extremes follow a Fréchet distribution, our expectation is that we are in the setting of Lemma \ref{main} and Theorem \ref{theorem_invariant} for the $k$-average and $k$-exceedance functional, respectively.
To show this is true, we approximate the relationship between the maximum likelihood estimate of the generalized extreme value distribution corresponding to yearlyblock maxima.
In total, we have 23 years (1995-2018) of daily rainfall data taken from 60 stations. That is, we have $23\times 365\times 60 =  503,700$ data points (approximately, with a small number of missing values replaced by interpolation). 
We perform maximum likelihood estimation of the yearly block maxima for window sizes $k = 1:10$, for the $k$-average functional and $k = 1:7$ for the $k$-exceedance functional.
Window sizes larger than these values result in low tail sampling and hence, unstable shape estimates which heavily affect the accuracy of the MLE. 

From Lemma \ref{main}, our expectation for the location of the $k$-average functional is that $$\mu_{2}(k) = g(k,T)\mu_1$$ where $\mu_1$ is the location parameter for the yearly block maximum of daily rainfall and $\mu_{2}(k)$ is the location parameter for the yearly block maximum of the $k$-average functional. If we let $y = \log(\mu_{2}(k))$ and $x = \log(k)$,  then if $g(k,T)\sim(O(k^{-\alpha}))$, we expect a linear fit,
\[
y = mx+\log(\mu_1)\tag{$\ast$}
\]
with slope $m$ and intercept $b = \log(\mu_1)$. The generalized linear model that is fit to the data using the relationship described by $(\ast)$ results in $m\approx -0.7$ and $b\approx 3.37 = \log(\mu_1)$, so that $g(k,T)$ is estimated as,
\[
g(k,T) \approx k^{m} = k^{-0.7}
\]
We validate this result on $\sigma_{2}(k)$ by showing that the relation,
\[
\sigma_{2} = g(k,T)\sigma_1 \approx k^{-0.7} \sigma_1
\]
holds true for our estimated $g(k,T)$. See figure \ref{fig:rainfall1}(a,b) for an illustration of the fit results for the $k$-average functional of yearly block maximum.

\begin{figure}
\begin{minipage}{\textwidth}
\includegraphics[width=\textwidth]{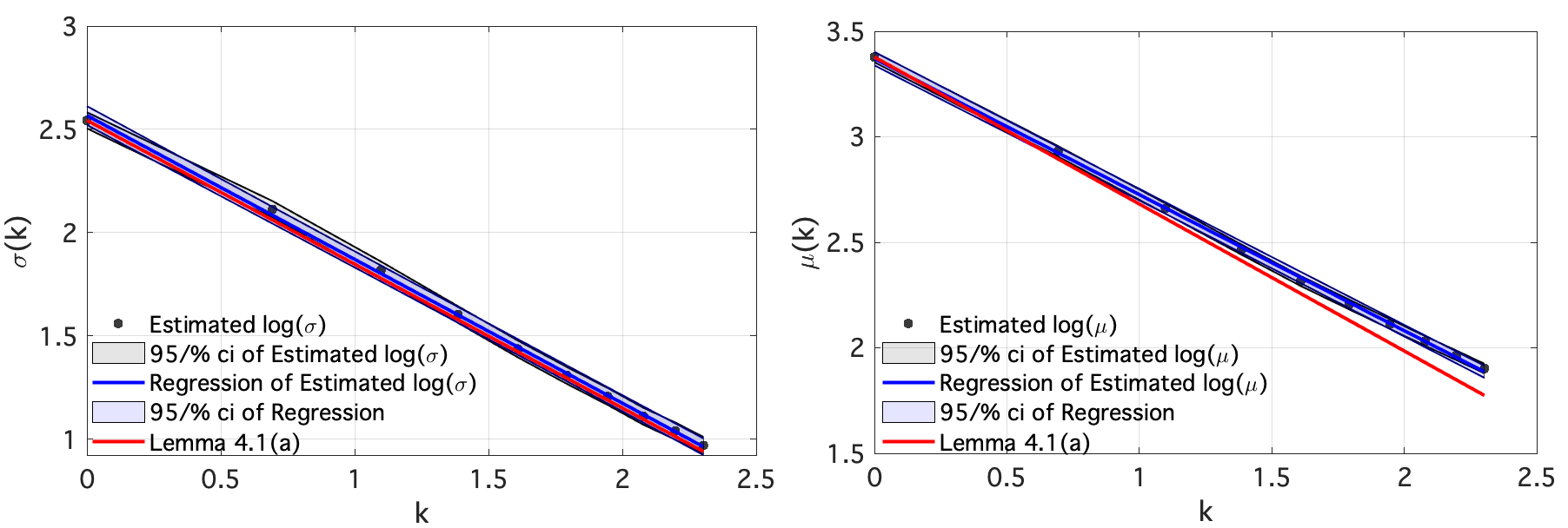}
\caption*{\qquad (a) \qquad \qquad \qquad \qquad \qquad \qquad \qquad \qquad \qquad (b)}
\end{minipage}
\begin{minipage}{\textwidth}
\includegraphics[width=\textwidth]{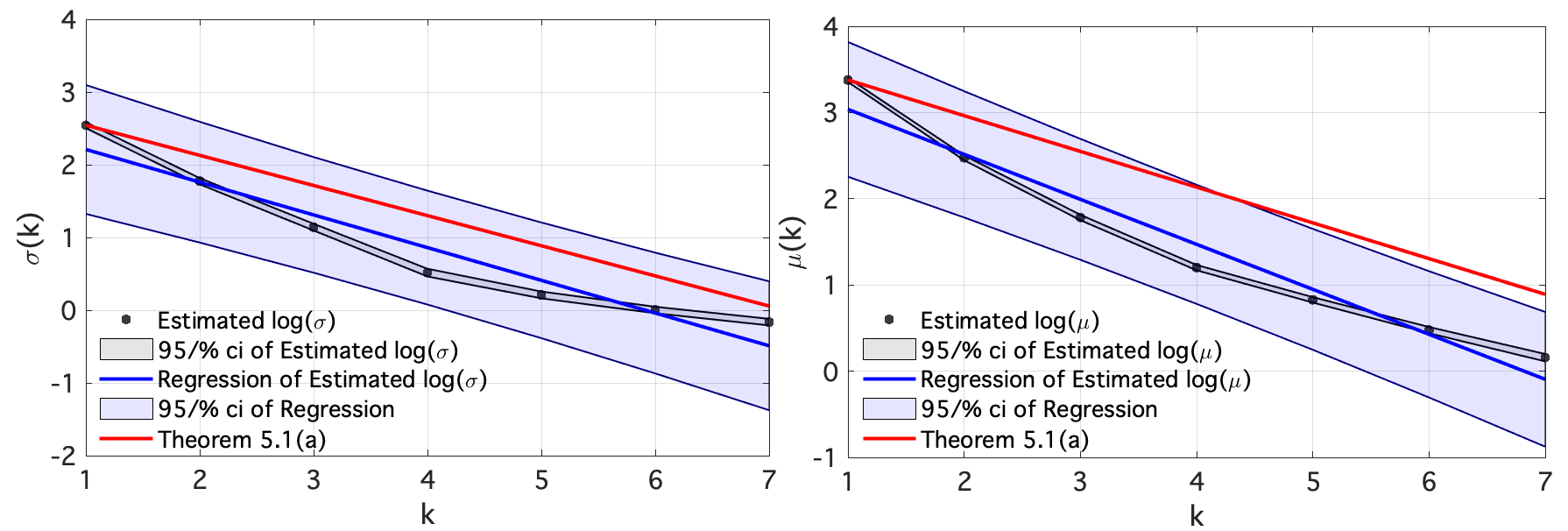}
\caption*{\qquad (c) \qquad \qquad \qquad \qquad \qquad \qquad \qquad \qquad \qquad (d)}
\end{minipage}
\caption{Rainfall in Germany location and scale relation for (a, b) $k$-average functional and (c, d) $k$-exceedance functional showing agreement with Lemma 4.1(a) and Theorem \ref{theorem_invariant} (a), respectively. }
\label{fig:rainfall1}
\end{figure}

From Theorem \ref{theorem_invariant}, our expectation for the location of the $k$-exceedance functional is that $$\mu_{2}(k) = \lambda^{-(k-1)\alpha}\mu_1$$ where $\mu_1$ is the location parameter for the yearly block maximum of daily rainfall, $\mu_{2}(k)$ is the location parameter for the yearly block maximum of the $k$-exceedance functional, and $\lambda$ is the expansion rate parameter.
Letting $y = \log(\mu_{2}(k))$ and $x = k-1$, provided our model is correct, we expect a linear fit,
\[
y = -\alpha\log(\lambda) x + \log(\mu_1)
\]
where $\alpha = \xi$ and hence our slope is $m = -\xi \log(\lambda)$ and our intercept $b = \log(\mu_1)$. 
In practice, we do not know the expansion rate of the map, $\lambda$, because we do not know the underlying complex map (or flow) that completely describes the climate; however, we can estimate such a $\lambda$ by the derived relationship $\theta = 1-\frac{1}{\lambda}$, where $\theta$ is the extremal index.
Hence, if our model is correct, we expect $1-\frac{1}{e^{-m/\xi}} = \theta$ and $b = \log(\mu_1)$.
We estimate the extremal index $\theta$ from the time series by Ferro-Segers and compare the numerical estimation of extremal index we obtain by estimating $\lambda$ from the slope $m$ of the generalized linear model.
See figure \ref{fig:rainfall1}(c,d) for an illustration of our numerical results.
We find our model performs reasonably well with $1-\frac{1}{e^{-m}} \approx 0.35$, ($\theta = 0.34$, approximated by Ferro-Segers) and $b \approx 4.08$ ($\log(\mu_1) = 4.34$, approximated by the MLE). 
Similar results are found for the scale parameter.


\begin{figure}
\begin{minipage}{0.45\textwidth}
\includegraphics[width = \textwidth]{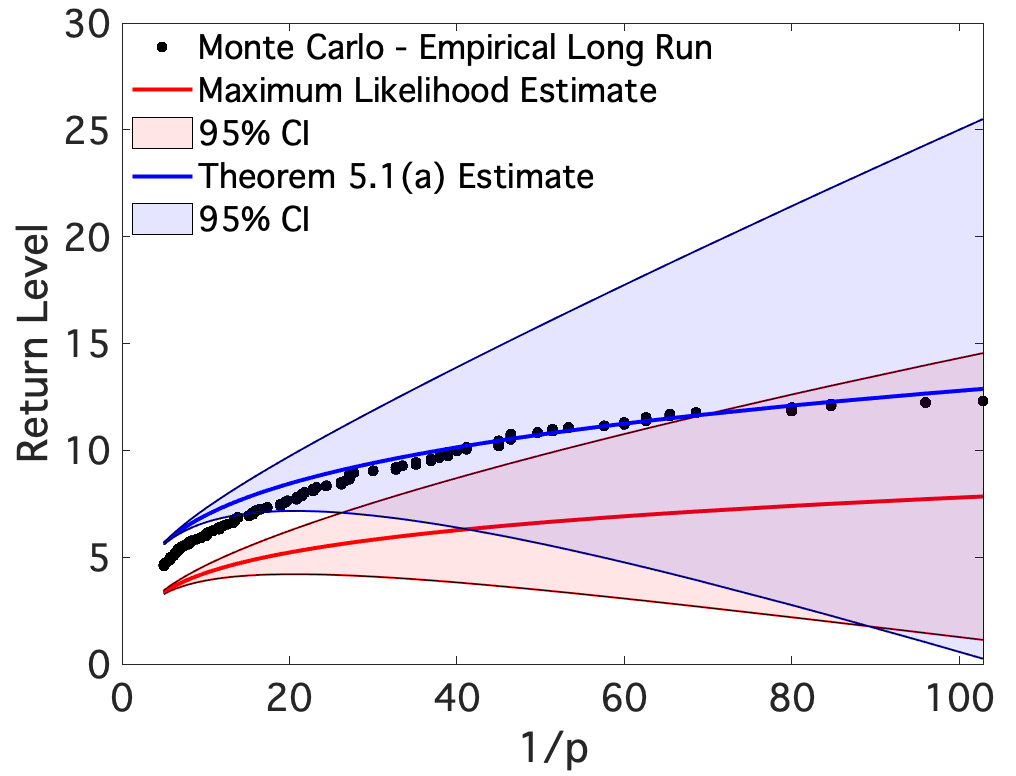}
\caption*{(a)}
\end{minipage}
\begin{minipage}{0.45\textwidth}
\includegraphics[width=\textwidth]{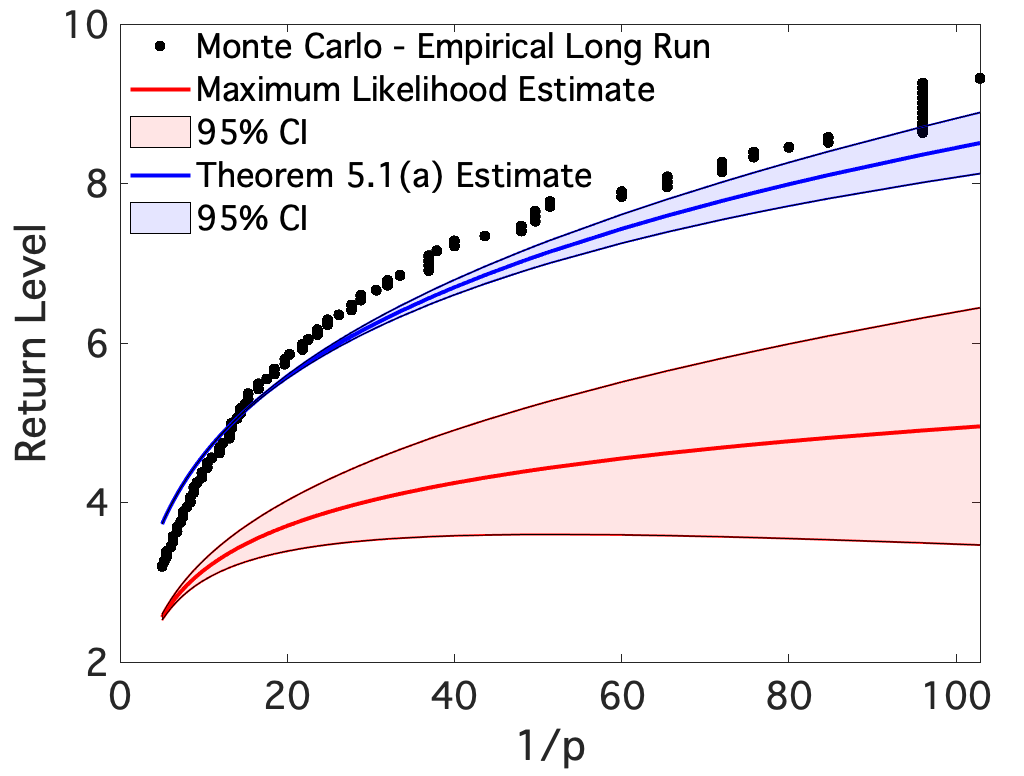}
\caption*{(b)}
\end{minipage}
\label{fig:rainfall3}
\caption{Return level plots using Theorem \ref{theorem_invariant} (a) verses maximum likelihood estimation for (a) $k=5$ and (b) $k=6$. Probabilities for the block maxima of the $k$-exceedance functional for $k=5$ (similarly, $k=6$) correspond to the yearly maximum of 5 (similarly, 6) consecutive daily rainfall values occurring over some sequence of high rainfall thresholds. *95\% confidence intervals do \textit{not} include error from GLM fits for the function $g(k,T)$.}
\end{figure}

We now estimate the return level plots of rainfall data from weather stations across Germany in the same way as Section 9.2. 
Recall that, in total, we have 23 years (1995-2018) of daily rainfall data taken from 60 stations. 
We use all available data to estimate the empirical tail probability estimates for maxima taken over block sizes of 1 year of daily rainfall, in total we have 1,380 block maxima.
We repeat this for increasing window sizes $k = 1:8$, where $k>8$ results in unreliable estimates as there is no longer enough available data to sample the tail.
We then limit our number of block maxima to 100 block maxima, run maximum likelihood estimation to estimate the parameters of the generalized extreme value distribution, and use our method to estimate these parameters.
We compare the resulting tail estimates (80-99.99\% quantiles) to the empirical estimate of the tail probabilities.
For window size $k \ge 3$, we obtain significantly better fits using our results, compared to maximum likelihood estimation.
One clear reason is the fluctuation in the shape parameter that is observed in for the MLE.

\subsection{Numerical Applications in the Weibull case.}
Given that we benchmark the numerical theorems in this paper against maximum likelihood estimates of the parameters for the GEV, we limit this discussion to shape parameters $\xi>-0.5$, where standard regularity properties, such as asymptotic normality, can be assumed on the maximum likelihood estimator \cite{Smith}. We consider the observable $\phi(x) = C-d(x,S)^{-\alpha}$ for $\alpha<0$ which guarantee the shape parameter $\xi>-0.5$. 

\subsubsection{ Numerical applications of Theorem \ref{theorem_invariant} (b): $k$-exceedances, maximized at an invariant set}

Consider the doubling map $T(x) = 2x\mod 1$ equipped with the observable $\phi(x) = 1-d(x,0)^{-0.4}$. From Theorem \ref{theorem_invariant} (b), we expect,
\[ 
\mu_2(k) = \frac{\sigma_1}{\xi_1}(2^{(k-1)\cdot 0.4}-1)+\mu_1, \quad \sigma_{2}(k) = 2^{(k-1)\cdot 0.4} \sigma_1, \quad \xi_2(k) = \xi_1 = 1
\]
We refer to figure \ref{fig:doublingweibull} for an illustration of the numerical agreement we obtain for Theorem \ref{theorem_invariant} (b). In terms of Lemma 4.1 (b), we remark that $\theta_2(k) = \theta_1$ for all $k$ in this example, as expected.

\begin{figure}
\includegraphics[width=0.8\textwidth]{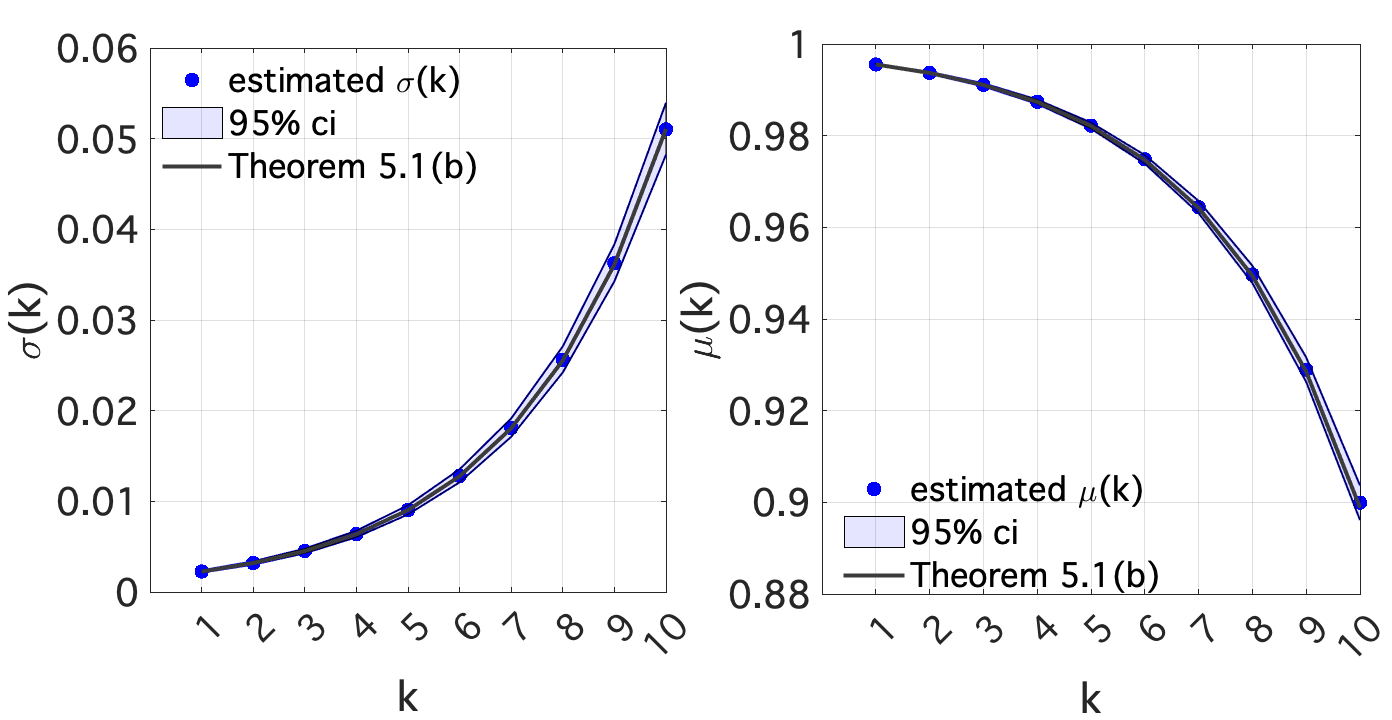}
\caption*{\qquad (a) \qquad \qquad \qquad \qquad \qquad \qquad \qquad (b)}
\caption{Numerical agreement with Theorem \ref{thm.avg_generic} for the doubling map and with the $k$-average functional, Weibull observable, and $x_0=0$ as the invariant fixed point.}
\label{fig:doublingweibull}
\end{figure}

\subsection{Simulated temperature data (PlaSim): Climate applications in the Weibull case.}
Planet simulator (PlaSim) is a general circulation model (GCM) of intermediate complexity developed by the Universit\"{a}t Hamburg Meteorological Institute \cite{plasim}. Like most atmospheric models, PlaSim is a simplified model derived from the Navier Stokes equation in a rotating frame of reference. A stripped version of the PlaSim model, known as PUMA, is a simple GCM with all the processes of PlaSim excluding moist processes. The model structure is described by five partial differential equations which allow for the conservation of mass, momentum, and energy.

The system of five differential equations are solved numerically with discretization given by a (variable) horizontal Gaussian grid and a vertical grid of equally spaced levels so that each grid-point has a corresponding latitude, longitude and depth triplet. (The default resolution is 32 latitude grid points, 64 longitude grid points and 5 levels.) At every fixed time step t and each grid point, the atmospheric flow is determined by solving the set of model equations through the spectral transform method which results in a set of time series describing the system; including temperature, pressure, zonal, meridional and horizontal wind velocity, among others. The resulting time series can be converted through the PlaSim interface into a readily accessible data file (such as netcdf) where further analysis can be performed using a variety of platforms. We refer to \cite{plasim} for more information.

For our purposes, we only consider the extremes of the output of temperature simulated by PUMA with daily and yearly cycles removed before simulation at a single, randomly selected latitude and longitude coordinate pair and a ground-level depth.

We first consider the $k$-exceedance functional so that from Theorem \ref{theorem_invariant} (b), we expect,
\[
\sigma_2(k) = g(k,T)\sigma_1,
\]
where $g(k,T) = \lambda^{-(k-1)\xi}$. We cannot assume $\lambda$ is exactly an expansion parameter for the system; however, we do not need this assumption for estimation. In fact, following the strategy from previous sections, we fit a GLM to the log-transformed data given by,
\[
\log(\sigma_2(k)) = -(k-1)\xi\lambda+\log(\sigma_1).
\]
By letting $y = \log(\sigma_2(k))$ and $x = k-1$, we can expect a linear fit of the data with slope $m = \xi\lambda$ and intercept $b = \log(\sigma_1)$. Our numerical results indicate that (1) a linear fit is indeed a good approximation to this log-transformed data (e.g. the model structure for $\sigma_2(k)$ follows what is expected) and (2) the intercept is within a small, $O(10^{-3})$ or less, tolerance of the expected intercept.

Next, we apply the estimated $g(k,T)$ to the relationship expected for $\mu_2(k)$ given by,
\[
\mu_2(k) = \frac{\sigma}{\xi}(g(k,T)-1)+\mu_1
\]
We find good agreement with the theoretical results stated in Theorem \ref{theorem_invariant}. These results are illustrated in figure \ref{fig:plasim}.

\begin{figure}
\includegraphics[width=\textwidth]{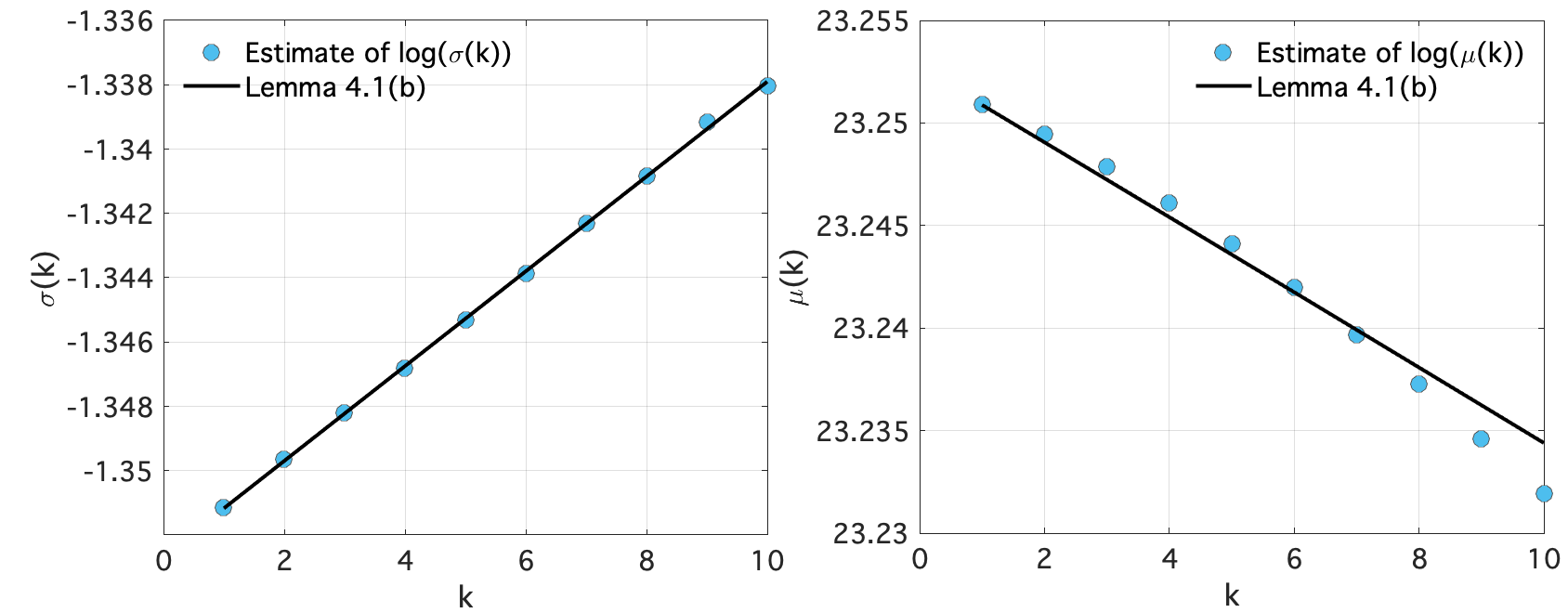}
\caption*{\qquad (a)\qquad \qquad \qquad \qquad \qquad \qquad \qquad \qquad \qquad (b)}
\caption{Plasim generated yearly maximum temperature (a) scale and (b) location relation for $k$-exceedance functional showing good numerical agreement to Theorem \ref{theorem_invariant} (b).}
\label{fig:plasim}
\end{figure}

\subsection{Real-world temperature data (Germany): Climate applications in the Wei-bull case II} In contrast to the rainfall analysis using Fr\'{e}chet results, real-world temperature data has a less evident relationship to the Weibull distance observable explored in the dynamical setting. In addition, daily and annual cycles govern the extremes of temperature and while there are ways of including such non-stationarity into a GEV model, an investigation of this magnitude is beyond the scope of this paper. We therefore focus on the application of the scaling Lemma 4.1 (b) and show that this result holds true for yearly maxima of hourly temperature measured at stations throughout Germany \cite{dwd2}. Recalling that Lemma 4.1 (b) assumes that a scaling $u_n$ and $w_n$ are related by a function $g(k,T)$, depending on the window length $k$ and the system $T$, that the scale parameter $\sigma_2(k)$ is given by,
\[
\sigma_2(k) = g(k,T)\sigma_1\bigg(\frac{\theta_2(k)}{\theta_1}\bigg)^{\xi}\tag{$\ast$}
\]
where $\xi$ is the shape parameter of the GEV for the original time-series (window length $k=1$), $\sigma_2(k)$ is the scale parameter and $\theta_2(k)$ is the extremal index of the time-series with window length $k$. Admittedly, the structure of the function $g(k,T)$ is not known apriori; however, taking the natural logarithm of the relationship $(\ast)$ can allow us to numerically linearize and estimate the $\log(g(k,T))$. The numerical estimate of the function $g(k,T)$ then amounts to fitting a generalized linear model (GLM) on,
\[
\log(g(k,T)) = \log\bigg(\frac{\sigma_2(k)}{\sigma_1}\bigg)-\xi\log\bigg(\frac{\theta_2(k)}{\theta_1}\bigg).
\]
In practice, this also requires reliable estimates of $\theta_2(k)$ and $\sigma_2(k)$ for some sequence of $k = 1:j$ to fit the GLM on $j$ points. As a result, our ability to accurately estimate $\log(g(k,T))$ depends on: the stability of the shape parameter for increasing $k$; the convergence time of $\sigma_2(k)$ for increasing values of $k$, which is likely increasing with increasing $k$ due to lower tail sampling and increased dependence; and the convergence time of the estimate for $\theta_2(k)$. Despite these difficulties, we are able to show that the scaling Lemma 4.1 (b) does indeed describe the behavior we observe in the GEV for the $k$-exceedance functional taken over windows of length $k$.

Calculating the maximum over yearly blocks, we obtain stable maximum likelihood estimates of the shape of the GEV for a sequence of windows of length $k = 1:10$, see figure \ref{fig:temp} (a). Using the maximum likelihood estimates of the scale $\sigma_2(k)$ for $k = 1:10$, and the Ferro-Segers estimate of the extremal index $\theta_2(k)$, we obtain $j = 10$ points with which to estimate the function $\log(g(k,T))$ as a GLM of the data. Results for this estimate are illustrated in figure \ref{fig:temp} (b). 

The GLM representing $\log(g(k,T))$ is then given by some $y(k) = b_0+b_1(k-1)$ so that,
\[
\hat{g}(k,T) = \exp\{b_0+b-1(k-1)\}.
\]
Finally, we use this estimate in the following relationship for the location parameter of the $k$-exceedance functional,
\[
\mu_k = \frac{\sigma_1}{\xi_1}(\hat{g}(k,T)-1)+\mu_1
\]
where $\mu_1$, $\sigma_1$, and $\xi_1$ are the maximum likelihood estimates of the location, scale, and shape, respectively, for the GEV of the yearly maximum of hourly temperature. Estimates from this application of the scaling Lemma 4.1 (b) and the maximum likelihood estimates of the location for increasing sliding windows $k$ of the $k$-exceedance functional are illustrated in figure \ref{fig:temp}. We obtain good numerical agreements with the theoretical relationship.

\begin{figure}
\centering
\includegraphics[width=0.85\textwidth]{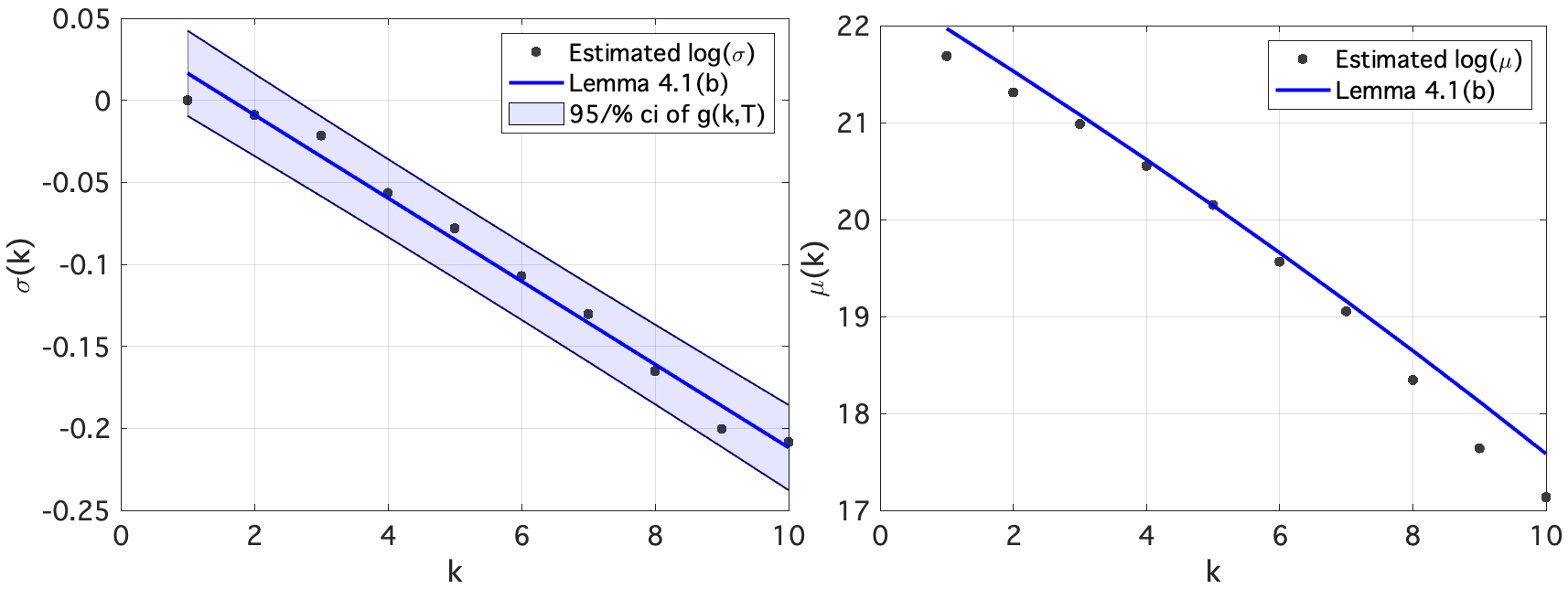}
\caption*{\qquad (a) \qquad \qquad \qquad \qquad \qquad \qquad \qquad \qquad (b)}
\caption{Germany temperature yearly maximum (a) scale and (b) location relation for $k$-exceedance functional showing good numerical agreement to Lemma 4.1(b). Note that $\theta_2(k)$ is not constant in this case, so that results on the trend of the location parameter found in Theorem \ref{theorem_invariant} (b) are not expected to hold.}\label{fig:temp}
\end{figure}

Although the results here support the appropriateness of the scaling Lemma 4.1 (b) to applications in temperature data, the practical use in replacing traditional likelihood estimates with the scale $\sigma$ and location $\mu$ relationships outlined in our lemma needs further investigation. In particular, one would need to consider convergence times of $\sigma$ and $\mu$ for increasing $k$ against the prediction horizon for the GLM representing $g(k,T)$ for large $k$. In any case, the results shown here support the strength of the scaling lemma, particularly in informing the shape parameter $\xi$ which remains constant for any $k$. With such a result, one can also consider keeping $\xi$ constant and optimizing the profile likelihood for the scale, $\sigma$ and location $\mu$ parameters for any value of $k$. We plan to do this investigation in the future for practical data application of these scaling laws. 

\section{Discussion of numerical results}
The implications of these results reach further when one notes that return level plots of either the Generalized Pareto (GP) or the Generalized Extreme Value (GEV) distribution by design, average out the returns of consecutive extremes. In the return level plot for the GP, one must decluster the data by its extremal index prior to fitting a GP and then average the effects of the extremal index back into the model by,
\[
P(X>x) = P(X>u)\bigg[1+\xi\bigg(\frac{x_m-u}{\sigma}\bigg)\bigg]^{-1/\xi} = \frac{1}{m\theta}
\]
where $x_m$ is the return level associated to the return period $m$ with probability $1/m$.

As an example, suppose that a location has an extremal index $\theta = 1/2$ so that, on average, once an exceedance above a threshold $x_m$ is observed, it is followed by another exceedance. Then, for the exceedance threshold $x_m$ corresponding to probability $\frac{1}{365}$, that is once per year, the probability occurring at $x_m$ using the GP distribution after incorporating the extremal index is reduced to $\frac{1}{182.5}$. Although this does translate to two occurrences of daily rainfall exceeding $x_m$ in a single year, it loses all information on the consecutive property of these occurrences.

GEV fitting of the block maxima takes away our ability to interpret results on consecutive over threshold values occurring on the order of the time-series because the blocking method only allows us to keep a single maximum value over the block. As a result, the extremal index gets factored into the location $\mu$ and scale $\sigma$ parameters as stated in the proof of Lemma 4.1 and \cite[Theorem 5.2, Page 96]{Coles} which further reduces any practical information on consecutive extremes in the return level plots for the GEV.

On the other hand, the $k$-exceedance and $k$-average functional, by their structure, preserve information on $k$ consecutive extremes and extremes of $k$ averages, respectively. 

Arguably the most impactful result of this investigation is the unchanging shape parameter observed in the Fr\'{e}chet and Weibull cases for both the $k$-average and $k$-exceedance functionals. Such a result is enough to obtain more accurate estimates on the $k$ consecutive, and $k$ averaged, returns of extreme weather events as estimation of the tail behavior for any $k$ can be done using the original time-series ($k = 1$) which features more tail sampling and hence drastically more accurate estimates of the shape parameter. Some examples on the implications of results in the Fr\'{e}chet case for more accurate real-world return time estimates of consecutive returns of rainfall extremes are provided and discussed for stations throughout Germany. 

Although data from Germany are used for applications of our scaling laws to extreme weather due to the quantity and quality of data available from the Deutscher Wetterdienst, we emphasize that these scaling laws can be applied to a wide range of data. For example, the general Lemma 4.1 (a,b) can be applied to any data having extremes with limiting distributions of the Fr\'{e}chet and Weibull types, respectively, while Theorem \ref{theorem_invariant} can be applied in the non-stationarity setting with appropriate time-dependent adjustments to the GEV parameters. We plan a future numerical investigation using the scaling results introduced in Theorem \ref{theorem_invariant} to non-stationary modeling of successive rainfall extremes using Australian rainfall data which has a known dependence on ENSO. Moreover, more accurate estimates of successive extremes in both the temperature and rainfall setting can be investigated by spatial pooling of data from weather stations using similar techniques as those in the time-dependent GEV setting.
\clearpage

\end{document}